\providecommand{\U}[1]{\protect\rule{.1in}{.1in}}
\newtheorem{theorem}{Theorem}[section]
\newtheorem{corollary}[theorem]{Corollary}
\newtheorem{remark}{Remark}[section]
\theoremstyle{definition}
\theoremstyle{remark}
\numberwithin{equation}{section}
\let\pdfoutput=\undefined\fi
\begin{document}
\pagestyle{myheadings}

\begin{center}
{\huge \textbf{A simultaneous decomposition of five real quaternion matrices with applications  }}

\bigskip

{\large \textbf{Zhuo-Heng He, Qing-Wen Wang}}

Department of Mathematics, Shanghai University, Shanghai 200444. P.R. China

E-mail: wqw369@yahoo.com

\end{center}

\begin{quotation}
\noindent\textbf{Abstract:}  In this paper, we construct a simultaneous decomposition of   five real quaternion matrices in which
three of  them have the same column numbers, meanwhile three of them have the same row numbers.
Using the simultaneous
matrix decomposition, we derive  the maximal and minimal ranks  of some real quaternion
matrices expressions.  We also show how to choose the variable real quaternion matrices such that the real quaternion matrix expressions
achieve their maximal and minimal ranks. As an application, we give a solvability condition and the general solution to
the real quaternion matrix equation $BXD+CYE=A$.  Moreover,   we give a simultaneous decomposition of seven real quaternion matrices.
\newline\noindent\textbf{Keywords:} Matrix decomposition; Matrix equation; Quaternion; Rank\newline%
\noindent\textbf{2010 AMS Subject Classifications:\ }{\small 15A24, 15A09,
15A03}\newline
\end{quotation}

\section{\textbf{Introduction}}

Let $\mathbb{R}$   be the real   number fields.
Let $\mathbb{H}^{m\times
n}$ be the set of all $m\times n$ matrices over the real quaternion algebra
\[
\mathbb{H}=\big\{a_{0}+a_{1}i+a_{2}j+a_{3}k\big|~i^{2}=j^{2}=k^{2}%
=ijk=-1,a_{0},a_{1},a_{2},a_{3}\in\mathbb{R}\big\}.
\]
For a quaternion matrix $A,$ we denote the conjugate
transpose, the column right space, the row left space of $A$ by $A^{\ast
},\mathcal{R}\left(  A\right)  $, $\mathcal{N}\left(  A\right)  ,$
respectively, the dimension of $\mathcal{R}\left(  A\right)  $ by
$\dim \mathcal{R}\left(  A\right)  .$ By \cite{1}, $\dim \mathcal{R}\left(
A\right)  =\dim$ $\mathcal{N}\left(  A\right)  ,$ which is called the rank of
the quaternion matrix $A$ and denoted by $r(A).$ It is easy to see that for any nonsingular matrices $P$ and $Q$ of appropriate sizes, $A$ and $PAQ$ have the same rank \cite{zhangfuzheng}. Moreover, for $A\in \mathbb{H}^{m\times n},$ by \cite{1}, there exist invertible matrices $P$ and $Q$
such that
\[
PAQ=\left(
\begin{array}{ll}
I_{r} & 0 \\
0 & 0%
\end{array}%
\right)
\]%
where $r=r(A),$ $I_{r}$ is the $r\times r$ identity matrix.

There is no need to emphasize the importance of matrix decomposition. Matrix decomposition is not only
a basic approach  in matrix theory, but also an applicable tool in other
areas in mathematics. Many papers have presented different matrix decompositions for different purposes (e.g. \cite{tree3},
\cite{dlch},  \cite{Y.LIU1}-\cite{ccp}, \cite{Took4}, \cite{ccp0}, \cite{weimusheng}, \cite{F.Zhang3}). For matrix factorization, there are many kinds of decompositions, for instance, the generalized singular decompose (\cite{van}, \cite{ccp}),  the restricted singular value decomposition of
matrix triplets \cite{tree1},  the decomposition of triple matrices which two of them have the same row numbers
meanwhile two of them have the same column numbers (\cite{dlch}, \cite{wiliam}),
the decomposition of  the matrix triplet $(A,B,C)$ (\cite{Y.LIU1}, \cite{Y.LIU244}, \cite{QWWangandyushaowen}),
the decomposition of  a matrix quaternity in which two of
them have the same column numbers, meanwhile three of them have the same row numbers \cite{zhangxia}.

The research  on maximal and minimal ranks of partial matrices
started in later 1980s. Some optimization problems on ranks
of matrix expressions attract much attention from both theoretical and practical points of view.
Minimal and maximal ranks   can be used in control theory (e.g. \cite{CHU1}, \cite{CHU5}). Cohen et al. \cite{cohen1} and  H.J. Woerdeman
\cite{Woerdeman1}-\cite{Woerdeman3} considered the maximal and minimal ranks of $3\times 3$ partial block matrix
\begin{align*}
\begin{bmatrix}A_{11}&A_{12}&X\\A_{21}&A_{22}&A_{23}\\Y&A_{32}&A_{33}\end{bmatrix}
\end{align*}
over $\mathbb{C}$.  Cohen and Dancis \cite{cohen} gave the minimal rank and extremal inertias of the Hermitian matrix
\begin{align*}
\begin{bmatrix}A&B&X\\B^{*}&C&D\\X^{*}&D^{*}&E\end{bmatrix}.
\end{align*}Liu \cite{liuLAMA} obtained the maximal and minimal ranks of $A-BXC$ using the restricted singular value decomposition (RSVD)
of the matrix triplet $(C,A,B)$.  Liu and Tian \cite{liuandtian} derived the maximal and minimal ranks of $A-BX-CY$ using the QQ-SVD
of the matrix triplet $(C,A,B)$.  Tian (\cite{Tian26}, \cite{Tian28}) studied the maximal and minimal ranks of the matrix expression
\begin{align}\label{fun02}
p(X,Y)=A-BXD-CYE
\end{align}
using generalized inverses of matrices. Chu, Hung and Woerdeman \cite{CHU3}  also considered the maximal and minimal ranks of
(\ref{fun02}).

The aim of this paper is to
revisit the maximal and minimal  ranks of the real quaternion matrix expression (\ref{fun02}) through a simultaneous
decomposition of the real quaternion matrix array
\begin{align}\label{fun01}
\begin{matrix}A&B&C\\D&&\\E&&\end{matrix}.
\end{align}
Our method is different from the methods mentioned in \cite{CHU3}, \cite{Tian26} and \cite{Tian28}.
We can derive the maximal and minimal ranks of the real quaternion matrix expression
\begin{align}\label{fun03}
f_{2}(X_{1},X_{2},X_{3},X_{4})=A-B_{1}X_{1}-X_{2}C_{2}-B_{3}X_{3}C_{3}-B_{4}X_{4}C_{4}.
\end{align}
We also consider the  solvability condition and minimal rank of the general solution to the real quaternion matrix equation
\begin{align}\label{sys01}
BXD+CYE=A.
\end{align}
Moreover,   we also give a simultaneous decomposition of the real quaternion matrix array
\begin{align*}
\begin{matrix}A&B&C&D\\E&&&\\F&&&\\G&&&\end{matrix},
\end{align*}
which  plays an important role in investigating the extreme ranks of the matrix expression $A-BXE-CYF-DZG$.

\section{\textbf{A simultaneous decomposition of the  real quaternion matrix array (\ref{fun01})}}

Now we give the main theorem of this section.

\begin{theorem}
\label{theorem01}Let $A\in \mathbb{H}^{m\times n},
B\in \mathbb{H}^{m\times p_{1}},C\in \mathbb{H}^{m\times p_{2}},D\in \mathbb{H}^{q_{1}\times n}$ and $E\in \mathbb{H}^{q_{2}\times n}$  be given. Then there exist
nonsingular matrices $P\in \mathbb{H}^{m\times m}$, $Q\in \mathbb{H}^{n\times n},T_{1}\in \mathbb{H}^{p_{1}\times p_{1}},
T_{2}\in \mathbb{H}^{p_{2}\times p_{2}},V_{1}\in \mathbb{H}^{q_{1}\times q_{1}},V_{2}\in \mathbb{H}^{q_{2}\times q_{2}}$ such that%
\begin{align}\label{equ021}
A=P S_{A}Q,B=P S_{B}T_{1}, C=P S_{C}T_{2}, D=V_{1} S_{D}Q, E=V_{2} S_{E}Q,
\end{align}
where%
\[
S_{A}=\left[
\begin{array}
[c]{ccccccc}%
0 & 0 & 0 & 0 & 0 & I_{m_{1}} & 0\\
0 & A_{1} & A_{2} & A_{3} &   0 & 0 & 0\\
0 & A_{4} & A_{5} & A_{6} & 0 & 0 & 0\\
0 & A_{7} & A_{8} & A_{9} & 0 & 0 & 0\\
0 & 0 & 0 & 0 & I_{m_{5}} & 0 & 0\\
I_{m_{6}} & 0 & 0 & 0 &  0 & 0 & 0 \\
0 & 0 & 0 & 0 &   0 & 0 & 0
\end{array}
\right],
S_{B}=\left[
\begin{array}
[c]{ccc}%
0 & 0&B_{1}\\
I_{m_{2}} &0&0\\
0 & I_{m_{3}}&0\\
0 & 0&0 \\
0 & 0&0 \\
0 & 0&0 \\
0 & 0&0
\end{array}
\right]
,
S_{C}=\left[
\begin{array}
[c]{ccc}%
C_{1} & C_{2}&0\\
0&0&0\\
0&I_{m_{3}} &0\\
0&0 & I_{m_{4}}\\
0 & 0&0 \\
0 & 0&0 \\
0 & 0&0
\end{array}
\right]
,
\]%
\[
S_{D}=\left[
\begin{array}
[c]{ccccccc}%
0 & I_{n_{2}}&0&0&0&0&0\\
0 &0& I_{n_{3}}&0&0&0&0\\
D_{1} & 0&0&0&0&0&0
\end{array}
\right]
,
S_{E}=\left[
\begin{array}
[c]{ccccccc}%
E_{1} & 0&0&0&0&0&0\\
E_{2}&0& I_{n_{3}}&0&0&0&0\\
0 & 0&0&I_{n_{4}}&0&0&0
\end{array}
\right]
,
\]
\begin{align}
&m_{1}+m_{5}+m_{6}=r\begin{bmatrix}A&B&C\end{bmatrix}+r\begin{bmatrix}A\\D\\E\end{bmatrix}-r\begin{bmatrix}A&B&C\\D&0&0\\E&0&0\end{bmatrix},\label{equ022}\\
&m_{2}=r\begin{bmatrix}A&B&C\\D&0&0\\E&0&0\end{bmatrix}-r\begin{bmatrix}A&C\\D&0\\E&0\end{bmatrix},
m_{4}=r\begin{bmatrix}A&B&C\\D&0&0\\E&0&0\end{bmatrix}-r\begin{bmatrix}A&B\\D&0\\E&0\end{bmatrix},\\
&m_{3}=r\begin{bmatrix}A&B\\D&0\\E&0\end{bmatrix}+r\begin{bmatrix}A&C\\D&0\\E&0\end{bmatrix}-r\begin{bmatrix}A&B&C\\D&0&0\\E&0&0\end{bmatrix}-
r\begin{bmatrix}A\\D\\E\end{bmatrix},\\
&n_{2}=r\begin{bmatrix}A&B&C\\D&0&0\\E&0&0\end{bmatrix}-r\begin{bmatrix}A&B&C\\ E&0&0\end{bmatrix},
n_{4}=r\begin{bmatrix}A&B&C\\D&0&0\\E&0&0\end{bmatrix}-r\begin{bmatrix}A&B&C\\ D&0&0\end{bmatrix},\\&
n_{3}=r\begin{bmatrix}A&B&C\\D&0&0 \end{bmatrix}+r\begin{bmatrix}A&B&C\\E&0&0 \end{bmatrix}-r\begin{bmatrix}A&B&C\\D&0&0\\E&0&0\end{bmatrix}-
r\begin{bmatrix}A&B&C\end{bmatrix}.\label{equ026}
\end{align}

\end{theorem}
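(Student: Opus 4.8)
The plan is to build the six nonsingular matrices by a sequence of block reductions, at each stage using known normal forms for matrix pairs/triplets over $\mathbb{H}$ and keeping track of which transformations are still available to act on the remaining blocks. First I would treat the column array $\begin{bmatrix}A&B&C\end{bmatrix}$ and the row array $\begin{bmatrix}A\\ D\\ E\end{bmatrix}$ together: by the decomposition of a matrix quaternity in which several blocks share row or column indices (the result of \cite{zhangxia} cited in the introduction, applied over $\mathbb{H}$ exactly as over $\mathbb{C}$ since quaternion matrices admit the same rank-normal form $PAQ=\begin{bmatrix}I_r&0\\0&0\end{bmatrix}$), there are nonsingular $P,Q,T_1,T_2,V_1,V_2$ putting $(A,B,C)$ and $(A,D,E)$ simultaneously into a coarse block form. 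Concretely, I would first choose $P$ and the $T_i$ to normalise $\begin{bmatrix}B&C\end{bmatrix}$ relative to $A$'s column space, then choose $Q$ and the $V_i$ to normalise $\begin{bmatrix}D\\ E\end{bmatrix}$ relative to $A$'s row space, being careful that the column operations used on $D,E$ are exactly the ones already fixed by $Q$, and symmetrically.

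The key steps, in order, are: (1) Use the decomposition of the triplet $(B,A,C)$-type array (two matrices sharing the row index of $A$) to split $P$-coordinates into the blocks indexed $m_1,\dots,m_6$ and the $T_i$-coordinates into the column blocks of $S_B,S_C$; this produces the $I_{m_2},I_{m_3},I_{m_4}$ identity blocks in $S_B,S_C$ and the residual blocks $B_1,C_1,C_2$. (2) Dually, use the decomposition of $(D,A,E)^{\!*}$-type array (two matrices sharing the column index of $A$) to split $Q$ into column blocks indexed $n_1,\dots$ and the $V_i$ into row blocks of $S_D,S_E$, producing $I_{n_2},I_{n_3},I_{n_4}$ and the residuals $D_1,E_1,E_2$. (3) The two reductions interact only through $A$: after fixing the outer blocks, what is left is the nine-block core $\begin{bmatrix}A_1&A_2&A_3\\ A_4&A_5&A_6\\ A_7&A_8&A_9\end{bmatrix}$ sitting in the positions dictated by the overlap of the row-partition from step (1) with the column-partition from step (2), together with the three identity "corners" $I_{m_1},I_{m_5},I_{m_6}$ coming from the parts of $\mathcal R(A)$ (resp. $\mathcal N(A)$) not reached by $B,C$ (resp. $D,E$). (4) Finally, verify the dimension count: each $m_i$ and $n_j$ is the dimension of a specific quotient of spans, and one expresses these via the inclusion–exclusion / modular identities for ranks of the bordered matrices $\begin{bmatrix}A&B&C\\ D&0&0\\ E&0&0\end{bmatrix}$ and its subarrays, which yields exactly formulas \eqref{equ022}--\eqref{equ026}.

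I would carry out step (4) by the standard device of computing $r(S_A)$ plus ranks of various subblocks of $S_A,S_B,S_C,S_D,S_E$ and matching them, block by block, with the ranks of the corresponding subarrays of \eqref{fun01}; since $P,Q,T_i,V_i$ are nonsingular these ranks are invariant, so each listed $r[\cdots]$ becomes a sum of some of the $m_i,n_j$ and of ranks of the $A_k$-blocks, and solving the resulting linear system for the $m_i,n_j$ gives \eqref{equ022}--\eqref{equ026}. The arithmetic is routine inclusion–exclusion once the block structure of $S_A$ is pinned down.

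The main obstacle I expect is step (3): guaranteeing that the row-partition produced by normalising the column array $\begin{bmatrix}A&B&C\end{bmatrix}$ can be made compatible with the column-partition produced by normalising the row array $\begin{bmatrix}A\\ D\\ E\end{bmatrix}$, i.e. that one does not "use up" the freedom in $P$ or $Q$ prematurely. The resolution is to perform the two reductions in a nested order — first split the ambient row space by $B,C$ into the six $m_i$-strata, then \emph{within each stratum} split the column space by $D,E$ — and to observe that row operations confined to a fixed horizontal strip of $S_A$ do not disturb the already-achieved identity blocks of $S_B,S_C$ in other strips, and symmetrically for columns. Keeping this bookkeeping straight (which residual blocks $A_1,\dots,A_9$ are still free to be altered by the remaining admissible transformations, and which identity blocks are frozen) is the technical heart of the argument; everything else follows from the existence of the rank normal form over $\mathbb{H}$ and elementary rank arithmetic.
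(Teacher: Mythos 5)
Your overall strategy (constructive block reduction plus rank bookkeeping) belongs to the same family as the paper's seven-step construction, but there is a genuine gap at precisely the point you yourself call the technical heart, and the resolution you sketch does not work as stated. First, the row stratification into the blocks of sizes $m_{1},\dots,m_{6}$ cannot be produced by a decomposition of the triplet $(A,B,C)$ alone, as your step (1) asserts: in the canonical form, $m_{5}$ is the rank of the sub-block of $A$ lying in the rows outside $\mathcal{R}[B\ \ C]$ \emph{and} the columns outside the column space of $[D^{*}\ \ E^{*}]$, and $m_{1}$, $m_{6}$, and even $m_{2},m_{3},m_{4}$ likewise depend on $D$ and $E$ (this is visible in the displayed rank formulas for the $m_{i}$, every one of which involves $D$ and $E$). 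Hence a "first reduce $(A,B,C)$, then refine by $(D,E)$ within the $m_{i}$-strata" order cannot even identify the strata; the construction must interleave the two sides from the start. The paper does exactly this: it first splits off simultaneously the rows not reached by $[B\ \ C]$ and the columns not reached by $[D;E]$, extracts the three identity corners $I_{m_{5}},I_{m_{1}},I_{m_{6}}$ of $S_{A}$ (its Steps 1--5), and only afterwards reduces the pair $(B,C)$ on the remaining row strip and the pair $(D,E)$ on the remaining column strip to the forms containing $I_{m_{2}},I_{m_{3}},I_{m_{4}}$ and $I_{n_{2}},I_{n_{3}},I_{n_{4}}$ (its Step 6).

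Second, your compatibility argument, that row operations confined to one horizontal strip do not disturb the identity blocks of $S_{B},S_{C}$ in other strips, does not cover the operations that are actually required at the end. To clear the coupling blocks of $B$ and $C$ that sit in the $I_{m_{1}}$-strip (the blocks called $B_{1}^{(6)},B_{2}^{(6)},C_{3}^{(6)}$ in the paper) one must add the $I_{m_{2}},I_{m_{3}},I_{m_{4}}$ strips to the $I_{m_{1}}$ strip, i.e.\ operate \emph{across} strips; this unavoidably perturbs $S_{A}$ (the $\phi_{i}$ blocks of the paper's Step 7), and one must then check that these perturbations can be eliminated using the $I_{m_{1}}$ and $I_{m_{6}}$ pivots without touching $S_{B},S_{C},S_{D},S_{E}$ --- which works only because those pivot rows and columns carry zero $B,C,D,E$ parts. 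Your sketch neither performs this correction pass nor explains why the irremovable residual blocks are exactly $B_{1},C_{1},C_{2},D_{1},E_{1},E_{2}$ together with the nine-block core $A_{1},\dots,A_{9}$; without that, the claimed canonical form is not established, and consequently the rank count in your step (4) (which is otherwise the standard and correct way to verify the formulas for the $m_{i}$ and $n_{j}$) has nothing to apply to.
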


\begin{proof}The proof is inspired by \cite{Y.LIU244} and \cite{QWWangandyushaowen}. The whole procedure is
constructive and consists of seven steps.

Step 1. We can find two nonsingular matrices $P_{1}$ and $Q_{1}$ such that
\begin{align*}
P_{1}\begin{bmatrix}B&C\end{bmatrix}=\begin{bmatrix}B^{(1)}&C^{(1)}\\0&0\end{bmatrix},
\begin{bmatrix}D\\E\end{bmatrix}Q_{1}=\begin{bmatrix}D^{(1)}&0\\E^{(1)}&0\end{bmatrix},
\end{align*}
where $\begin{bmatrix}B^{(1)}&C^{(1)}\end{bmatrix}$ has full row rank, and $\begin{bmatrix}D^{(1)}\\E^{(1)}\end{bmatrix}$
has full column rank. Denote
\begin{align*}
P_{1}AQ_{1}=\begin{bmatrix}A_{1}^{(1)}&A_{2}^{(1)}\\A_{3}^{(1)}&A_{4}^{(1)}\end{bmatrix}.
\end{align*}

Step 2. There exist two nonsingular matrices $P_{2}$ and $Q_{2}$ such that
\begin{align*}
P_{2}A_{4}^{(1)}Q_{2}=\begin{bmatrix}I_{m_{5}}&0\\0&0\end{bmatrix},~m_{5}=r(A_{4}^{(1)}).
\end{align*}
Then
\begin{align*}
diag(I,P_{2})\begin{bmatrix}A_{1}^{(1)}&A_{2}^{(1)}\\A_{3}^{(1)}&A_{4}^{(1)}\end{bmatrix}
diag(I,Q_{2}):=\begin{bmatrix}A_{1}^{(2)}&A_{2}^{(2)}&A_{3}^{(2)}\\A_{4}^{(2)}&I_{m_{5}}&0\\A_{5}^{(2)}&0&0\end{bmatrix},
\end{align*}
\begin{align*}
diag(I,P_{2})\begin{bmatrix}B^{(1)}&C^{(1)}\\0&0\end{bmatrix}:=\begin{bmatrix}B^{(2)}&C^{(2)} \\0&0\\ 0&0\end{bmatrix},
\begin{bmatrix}D^{(1)}&0\\E^{(1)}&0\end{bmatrix}diag(I,Q_{2}):=\begin{bmatrix}D^{(2)}&0&0\\E^{(2)}&0&0\end{bmatrix},
\end{align*}
where $\begin{bmatrix}B^{(2)}&C^{(2)}\end{bmatrix}$ has full row rank, and $\begin{bmatrix}D^{(2)}\\E^{(2)}\end{bmatrix}$
has full column rank.

Step 3. Set
\begin{align*}
P_{3}=\begin{bmatrix}I&-A_{2}^{(2)}&0\\0&I&0\\0&0&I\end{bmatrix},
Q_{3}=\begin{bmatrix}I&0&0\\-A_{4}^{(2)}&I&0\\0&0&I\end{bmatrix}.
\end{align*}
Then we have
\begin{align*}
P_{3}\begin{bmatrix}A_{1}^{(2)}&A_{2}^{(2)}&A_{3}^{(2)}\\A_{4}^{(2)}&I_{m_{5}}&0\\A_{5}^{(2)}&0&0\end{bmatrix}Q_{3}:=
\begin{bmatrix}A_{1}^{(3)}&0&A_{2}^{(3)}\\0&I_{m_{5}}&0\\A_{3}^{(3)}&0&0\end{bmatrix},
\end{align*}
\begin{align*}
P_{3}\begin{bmatrix}B^{(2)}&C^{(2)} \\0&0\\ 0&0\end{bmatrix}:=
\begin{bmatrix}B^{(3)}&C^{(3)} \\0&0\\ 0&0\end{bmatrix},
\begin{bmatrix}D^{(2)}&0&0\\E^{(2)}&0&0\end{bmatrix}Q_{3}:=\begin{bmatrix}D^{(3)}&0&0\\E^{(3)}&0&0\end{bmatrix}.
\end{align*}

Step 4. We can choose nonsingular matrices $P_{4},Q_{4},P_{5}$ and $Q_{5}$ such that such that
\begin{align*}
P_{4}A_{2}^{(3)}Q_{4}=\begin{bmatrix}I_{m_{1}}&0\\0&0\end{bmatrix},~
P_{5}A_{3}^{(3)}Q_{5}=\begin{bmatrix}I_{m_{6}}&0\\0&0\end{bmatrix},~r(A_{2}^{(3)})=m_{1},~r(A_{3}^{(3)})=m_{6}.
\end{align*}
Then
\begin{align*}
diag(P_{4},I,P_{5})\begin{bmatrix}A_{1}^{(3)}&0&A_{2}^{(3)}\\0&I_{m_{5}}&0\\A_{3}^{(3)}&0&0\end{bmatrix}
diag(Q_{4},I,Q_{5}):=\begin{bmatrix}A_{1}^{(4)}&A_{2}^{(4)}&0&I_{m_{1}}&0\\
A_{3}^{(4)}&A_{4}^{(4)}&0&0&0\\0&0&I_{m_{5}}&0&0\\I_{m_{6}}&0&0&0&0\\0&0&0&0&0\end{bmatrix},
\end{align*}
\begin{align*}
diag(P_{4},I,P_{5})\begin{bmatrix}B^{(3)}&C^{(3)} \\0&0\\ 0&0\end{bmatrix}:=\begin{bmatrix}B_{1}^{(4)}&C_{1}^{(4)} \\
B_{2}^{(4)}&C_{2}^{(4)} \\0&0\\0&0\\  0&0\end{bmatrix},
\end{align*}
\begin{align*}
\begin{bmatrix}D^{(3)}&0&0\\E^{(3)}&0&0\end{bmatrix}
diag(Q_{4},I,Q_{5}):=\begin{bmatrix}D_{1}^{(4)}&D_{2}^{(4)}&0&0&0\\E_{1}^{(4)}&E_{2}^{(4)}&0&0&0\end{bmatrix}.
\end{align*}
Especially, we have
\begin{align*}
r\begin{bmatrix}B&C\end{bmatrix}=r\begin{bmatrix}B^{(3)}&C^{(3)}  \end{bmatrix}=r\begin{bmatrix}B_{1}^{(4)}&C_{1}^{(4)} \\
B_{2}^{(4)}&C_{2}^{(4)}\end{bmatrix},
r\begin{bmatrix}D&E\end{bmatrix}=r\begin{bmatrix}D^{(3)}\\E^{(3)}  \end{bmatrix}=r\begin{bmatrix}D_{1}^{(4)}&D_{2}^{(4)} \\E_{1}^{(4)}&E_{2}^{(4)} \end{bmatrix},
\end{align*}
i.e., $\begin{bmatrix}B_{2}^{(4)}&C_{2}^{(4)}\end{bmatrix}$ has full row rank, and $\begin{bmatrix}D_{2}^{(4)}\\E_{2}^{(4)}\end{bmatrix}$
has full column rank.

Step 5. Set
\begin{align*}
P_{6}=\begin{bmatrix}I&0&0&-A_{1}^{(4)}&0\\0&I&0&-A_{3}^{(4)}&0\\0&0&I&0&0\\0&0&0&I&0\\0&0&0&0&I\end{bmatrix},
Q_{6}=\begin{bmatrix}I&0&0&0&0\\0&I&0&0&0\\0&0&I&0&0\\0&-A_{2}^{(4)}&0&I&0\\0&0&0&0&I\end{bmatrix}.
\end{align*}
Then we have
\begin{align*}
P_{6}\begin{bmatrix}A_{1}^{(4)}&A_{2}^{(4)}&0&I_{m_{1}}&0\\
A_{3}^{(4)}&A_{4}^{(4)}&0&0&0\\0&0&I_{m_{5}}&0&0\\I_{m_{6}}&0&0&0&0\\0&0&0&0&0\end{bmatrix}Q_{6}:=
\begin{bmatrix}0&0&0&I_{m_{1}}&0\\
0&A_{1}^{(5)}&0&0&0\\0&0&I_{m_{5}}&0&0\\I_{m_{6}}&0&0&0&0\\0&0&0&0&0\end{bmatrix},
\end{align*}
\begin{align*}
P_{6}\begin{bmatrix}B_{1}^{(4)}&C_{1}^{(4)} \\
B_{2}^{(4)}&C_{2}^{(4)} \\0&0\\0&0\\  0&0\end{bmatrix}:=
\begin{bmatrix}B_{1}^{(5)}&C_{1}^{(5)} \\
B_{2}^{(5)}&C_{2}^{(5)} \\0&0\\0&0\\  0&0\end{bmatrix},
\begin{bmatrix}D_{1}^{(4)}&D_{2}^{(4)}&0&0&0\\E_{1}^{(4)}&E_{2}^{(4)}&0&0&0\end{bmatrix}Q_{6}:
=\begin{bmatrix}D_{1}^{(5)}&D_{2}^{(5)}&0&0&0\\E_{1}^{(5)}&E_{2}^{(5)}&0&0&0\end{bmatrix},
\end{align*}
where $\begin{bmatrix}B_{2}^{(5)}&C_{2}^{(5)}\end{bmatrix}$ has full row rank, and $\begin{bmatrix}D_{2}^{(5)}\\E_{2}^{(5)}\end{bmatrix}$
has full column rank.

Step 6. We can find six nonsingular matrices $P_{7},Q_{7},W_{B},W_{C},W_{D},W_{E}$ such that
\begin{align*}
P_{7}\begin{bmatrix}B_{2}^{(5)}&C_{2}^{(5)}\end{bmatrix}\begin{bmatrix}W_{B}&0\\0&W_{C}\end{bmatrix}=
\begin{bmatrix}I_{m_{2}}&0&0&0&0&0\\0&I_{m_{3}}&0&0&I_{m_{3}}&0\\0&0&0&0&0&I_{m_{4}}\end{bmatrix},
\end{align*}
\begin{align*}
\begin{bmatrix}W_{D}&0\\0&W_{E}\end{bmatrix}\begin{bmatrix}D_{2}^{(5)}\\E_{2}^{(5)}\end{bmatrix}Q_{7}=
\begin{bmatrix}I_{n_{2}}&0&0\\0&I_{n_{3}}&0 \\0&0&0\\0&0&0\\0&I_{n_{3}}&0\\0&0&I_{n_{4}}\end{bmatrix}.
\end{align*}
Then
\begin{align*}
&diag(I,P_{7},I,I,I)\begin{bmatrix}0&0&0&I_{m_{1}}&0\\
0&A_{1}^{(5)}&0&0&0\\0&0&I_{m_{5}}&0&0\\I_{m_{6}}&0&0&0&0\\0&0&0&0&0\end{bmatrix}diag(I,Q_{7},I,I,I):=\\&
\left[
\begin{array}
[c]{ccccccc}%
0 & 0 & 0 & 0 & 0 & I_{m_{1}} & 0\\
0 & A_{1}^{(6)} & A_{2}^{(6)} & A_{3}^{(6)} &   0 & 0 & 0\\
0 & A_{4}^{(6)} & A_{5}^{(6)} & A_{6}^{(6)} & 0 & 0 & 0\\
0 & A_{7}^{(6)} & A_{8}^{(6)} & A_{9}^{(6)} & 0 & 0 & 0\\
0 & 0 & 0 & 0 & I_{m_{5}} & 0 & 0\\
I_{m_{6}} & 0 & 0 & 0 &  0 & 0 & 0 \\
0 & 0 & 0 & 0 &   0 & 0 & 0
\end{array}
\right],
\end{align*}
\begin{align*}
diag(I,P_{7},I,I,I)\begin{bmatrix}B_{1}^{(5)}&C_{1}^{(5)} \\
B_{2}^{(5)}&C_{2}^{(5)} \\0&0\\0&0\\  0&0\end{bmatrix}\begin{bmatrix}W_{B}&0\\0&W_{C}\end{bmatrix}:=
\begin{bmatrix}B_{1}^{(6)}&B_{2}^{(6)}&B_{3}^{(6)}&C_{1}^{(6)}&C_{2}^{(6)}&C_{3}^{(6)}\\I_{m_{2}}&0&0&0&0&0\\0&I_{m_{3}}&0&0&I_{m_{3}}&0\\0&0&0&0&0&I_{m_{4}}\\
0&0&0&0&0&0\\0&0&0&0&0&0\\0&0&0&0&0&0\end{bmatrix},
\end{align*}
\begin{align*}
\begin{bmatrix}W_{D}&0\\0&W_{E}\end{bmatrix}\begin{bmatrix}D_{1}^{(5)}&D_{2}^{(5)}&0&0&0\\E_{1}^{(5)}&E_{2}^{(5)}&0&0&0\end{bmatrix}diag(I,Q_{7},I,I,I):
=\begin{bmatrix}D_{1}^{(6)}&I_{n_{2}}&0&0&0&0&0\\D_{2}^{(6)}&0&I_{n_{3}}&0 &0&0&0\\D_{3}^{(6)}&0&0&0&0&0&0\\E_{1}^{(6)}&0&0&0&0&0&0\\E_{2}^{(6)}&0&I_{n_{3}}&0&0&0&0\\E_{3}^{(6)}&0&0&I_{n_{4}}&0&0&0\end{bmatrix}.
\end{align*}

Step 7. Set
\begin{align*}
P_{8}=\begin{bmatrix}I&-B_{1}^{(6)}&-B_{2}^{(6)}&-C_{3}^{(6)}&0&0&0\\
0&I&0&0&0&0&0\\
0&0&I&0&0&0&0\\
0&0&0&I&0&0&0\\
0&0&0&0&I&0&0\\
0&0&0&0&0&I&0\\
0&0&0&0&0&0&I
\end{bmatrix},~Q_{8}=\begin{bmatrix}I&0&0&0&0&0&0\\
-D_{1}^{(6)}&I&0&0&0&0&0\\
-D_{2}^{(6)}&0&I&0&0&0&0\\
-E_{3}^{(6)}&0&0&I&0&0&0\\
0&0&0&0&I&0&0\\
0&0&0&0&0&I&0\\
0&0&0&0&0&0&I
\end{bmatrix}.
\end{align*}
Then we have
\begin{align*}
P_{8}\begin{bmatrix}B_{1}^{(6)}&B_{2}^{(6)}&B_{3}^{(6)}&C_{1}^{(6)}&C_{2}^{(6)}&C_{3}^{(6)}\\I_{m_{2}}&0&0&0&0&0\\0&I_{m_{3}}&0&0&I_{m_{3}}&0\\0&0&0&0&0&I_{m_{4}}\\
0&0&0&0&0&0\\0&0&0&0&0&0\\0&0&0&0&0&0\end{bmatrix}:=
\begin{bmatrix}0&0&B_{1} &C_{1} &C_{2} &0\\I_{m_{2}}&0&0&0&0&0\\0&I_{m_{3}}&0&0&I_{m_{3}}&0\\0&0&0&0&0&I_{m_{4}}\\
0&0&0&0&0&0\\0&0&0&0&0&0\\0&0&0&0&0&0\end{bmatrix},
\end{align*}
\begin{align*}
\begin{bmatrix}D_{1}^{(6)}&I_{n_{2}}&0&0&0&0&0\\D_{2}^{(6)}&0&I_{n_{3}}&0 &0&0&0\\D_{3}^{(6)}&0&0&0&0&0&0\\E_{1}^{(6)}&0&0&0&0&0&0\\E_{2}^{(6)}&0&I_{n_{3}}&0&0&0&0\\E_{3}^{(6)}&0&0&I_{n_{4}}&0&0&0\end{bmatrix}
Q_{8}
:=\begin{bmatrix}0&I_{n_{2}}&0&0&0&0&0\\0&0&I_{n_{3}}&0 &0&0&0\\D_{1} &0&0&0&0&0&0\\E_{1} &0&0&0&0&0&0\\E_{2} &0&I_{n_{3}}&0&0&0&0\\0&0&0&I_{n_{4}}&0&0&0\end{bmatrix},
\end{align*}
\begin{align*}
P_{8}\left[
\begin{array}
[c]{ccccccc}%
0 & 0 & 0 & 0 & 0 & I_{m_{1}} & 0\\
0 & A_{1}^{(6)} & A_{2}^{(6)} & A_{3}^{(6)} &   0 & 0 & 0\\
0 & A_{4}^{(6)} & A_{5}^{(6)} & A_{6}^{(6)} & 0 & 0 & 0\\
0 & A_{7}^{(6)} & A_{8}^{(6)} & A_{9}^{(6)} & 0 & 0 & 0\\
0 & 0 & 0 & 0 & I_{m_{5}} & 0 & 0\\
I_{m_{6}} & 0 & 0 & 0 &  0 & 0 & 0 \\
0 & 0 & 0 & 0 &   0 & 0 & 0
\end{array}
\right]Q_{8}:= \left[
\begin{array}
[c]{ccccccc}%
\phi_{1} &\phi_{2} & \phi_{3} & \phi_{4} & 0 & I_{m_{1}} & 0\\
\phi_{5} & A_{1}^{(6)} & A_{2}^{(6)} & A_{3}^{(6)} &   0 & 0 & 0\\
\phi_{6} & A_{4}^{(6)} & A_{5}^{(6)} & A_{6}^{(6)} & 0 & 0 & 0\\
\phi_{7} & A_{7}^{(6)} & A_{8}^{(6)} & A_{9}^{(6)} & 0 & 0 & 0\\
0 & 0 & 0 & 0 & I_{m_{5}} & 0 & 0\\
I_{m_{6}} & 0 & 0 & 0 &  0 & 0 & 0 \\
0 & 0 & 0 & 0 &   0 & 0 & 0
\end{array}
\right],
\end{align*}
where
\begin{align*}
\phi_{1}=-\phi_{2}D_{1}^{(6)}-\phi_{3}D_{2}^{(6)}-\phi_{4}E_{3}^{(6)},
\end{align*}
\begin{align*}
\phi_{2}=-B_{1}^{(6)}A_{1}^{(6)}-B_{2}^{(6)}A_{4}^{(6)}-C_{3}^{(6)}A_{7}^{(6)},~
\phi_{3}=-B_{1}^{(6)}A_{2}^{(6)}-B_{2}^{(6)}A_{5}^{(6)}-C_{3}^{(6)}A_{8}^{(6)},
\end{align*}
\begin{align*}
\phi_{4}=-B_{1}^{(6)}A_{3}^{(6)}-B_{2}^{(6)}A_{6}^{(6)}-C_{3}^{(6)}A_{9}^{(6)},~
\phi_{5}=-A_{1}^{(6)}D_{1}^{(6)}-A_{2}^{(6)}D_{2}^{(6)}-A_{3}^{(6)}E_{3}^{(6)},
\end{align*}
\begin{align*}
\phi_{6}=-A_{4}^{(6)}D_{1}^{(6)}-A_{5}^{(6)}D_{2}^{(6)}-A_{6}^{(6)}E_{3}^{(6)},~
\phi_{7}=-A_{7}^{(6)}D_{1}^{(6)}-A_{8}^{(6)}D_{2}^{(6)}-A_{9}^{(6)}E_{3}^{(6)}.
\end{align*}
Using $I_{m_{1}},I_{m_{6}} $ as the pivots to eliminate the blocks
$\phi_{1},\phi_{2}, \phi_{3}, \phi_{4},\phi_{5},\phi_{6}, \phi_{7} $, respectively, we obtain
\begin{align*}
&\begin{bmatrix}I&0&0&0&0&-\phi_{1}&0\\
0&I&0&0&0&-\phi_{5}&0\\
0&0&I&0&0&-\phi_{6}&0\\
0&0&0&I&0&-\phi_{7}&0\\
0&0&0&0&I&0&0\\
0&0&0&0&0&I&0\\
0&0&0&0&0&0&I
\end{bmatrix}\left[
\begin{array}
[c]{ccccccc}%
\phi_{1} &\phi_{2} & \phi_{3} & \phi_{4} & 0 & I_{m_{1}} & 0\\
\phi_{5} & A_{1}^{(6)} & A_{2}^{(6)} & A_{3}^{(6)} &   0 & 0 & 0\\
\phi_{6} & A_{4}^{(6)} & A_{5}^{(6)} & A_{6}^{(6)} & 0 & 0 & 0\\
\phi_{7} & A_{7}^{(6)} & A_{8}^{(6)} & A_{9}^{(6)} & 0 & 0 & 0\\
0 & 0 & 0 & 0 & I_{m_{5}} & 0 & 0\\
I_{m_{6}} & 0 & 0 & 0 &  0 & 0 & 0 \\
0 & 0 & 0 & 0 &   0 & 0 & 0
\end{array}
\right]\begin{bmatrix}I&0&0&0&0&0&0\\
0&I&0&0&0&0&0\\
0&0&I&0&0&0&0\\
0&0&0&I&0&0&0\\
0&0&0&0&I&0&0\\
0&-\phi_{2}&-\phi_{3}&-\phi_{4}&0&I&0\\
0&0&0&0&0&0&I
\end{bmatrix}\\&:=
\left[
\begin{array}
[c]{ccccccc}%
0 & 0 & 0 & 0 & 0 & I_{m_{1}} & 0\\
0 & A_{1} & A_{2} & A_{3} &   0 & 0 & 0\\
0 & A_{4} & A_{5} & A_{6} & 0 & 0 & 0\\
0 & A_{7} & A_{8} & A_{9} & 0 & 0 & 0\\
0 & 0 & 0 & 0 & I_{m_{5}} & 0 & 0\\
I_{m_{6}} & 0 & 0 & 0 &  0 & 0 & 0 \\
0 & 0 & 0 & 0 &   0 & 0 & 0
\end{array}
\right].
\end{align*}
After Step 7, we have established the decomposition.

\end{proof}
\section{\textbf{Maximal and minimal ranks of (\ref{fun02}) and (\ref{fun03})}}

As applications of  Theorem \ref{theorem01}, we in this section consider the
maximal and minimal ranks of (\ref{fun02}) and (\ref{fun03}) over $\mathbb{H}$. We also
show how to choose the variable real quaternion matrices such that the real quaternion matrix expressions
achieve their maximal and minimal ranks.
\begin{theorem}\label{theorem02}
Let $A,B,C,D,E$ be given quaternion matrices, and $p(X,Y)$ be as given  in (\ref{fun02}). Then
\begin{align*}
 \mathop {\max }\limits_{ X,Y } r\left[ {p\left( {X,Y} \right)} \right]
 = \min \left\{ r\left[ {\begin{array}{*{20}{c}}
   A   \\
  D  \\
   E
\end{array}} \right],r\left[ {\begin{array}{*{20}{c}}
   A & B&C
\end{array}} \right],
 r\left[ {\begin{array}{*{20}{c}}
   A & B  \\
   E & 0
\end{array}} \right],r\left[ {\begin{array}{*{20}{c}}
   A &C \\
   D&0
\end{array}} \right] \right\} ,
\end{align*}
\begin{align*}
\mathop {\min}\limits_{ X,Y } r\left[ {p\left( {X,Y} \right)} \right]
 &= r\left[ {\begin{array}{*{20}{c}}
   A  \\
   D\\E
\end{array}} \right] + r\left[ {\begin{array}{*{20}{c}}
   A & B&C
\end{array}} \right]
  + \max \Bigg\{ r\left[ {\begin{array}{*{20}{c}}
   A & B  \\
  E&0
\end{array}} \right] - r\left[ {\begin{array}{*{20}{c}}
   A & B&C \\
   E&0&0
\end{array}} \right]
\end{align*}
\begin{align*}
- r\left[ {\begin{array}{*{20}{c}}
   A & B  \\
   D&0\\E&0
\end{array}} \right],r\left[ {\begin{array}{*{20}{c}}
   A & C \\
  D&0
\end{array}} \right] - r\left[ {\begin{array}{*{20}{c}}
   A & B&C \\
   D&0&0
\end{array}} \right] - r\left[ {\begin{array}{*{20}{c}}
   A & C \\
   D&0\\E&0
\end{array}} \right]\Bigg\} .
\end{align*}
\end{theorem}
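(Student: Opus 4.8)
Here is my plan of attack.

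\medskip

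\noindent\textbf{Step 1: Reduce to the canonical form via Theorem~\ref{theorem01}.} Writing $A=PS_AQ$, $B=PS_BT_1$, $C=PS_CT_2$, $D=V_1S_DQ$, $E=V_2S_EQ$ as in \eqref{equ021}, I substitute into $p(X,Y)=A-BXD-CYE$ to get
\begin{align*}
p(X,Y)=P\bigl(S_A-S_B(T_1XV_1)S_D-S_C(T_2YV_2)S_E\bigr)Q .
\end{align*}
Since $P,Q$ are invertible and $X\mapsto T_1XV_1$, $Y\mapsto T_2YV_2$ are bijections of $\mathbb{H}^{p_1\times q_1}$ and $\mathbb{H}^{p_2\times q_2}$ onto themselves, it follows that
\begin{align*}
\max_{X,Y}r\bigl[p(X,Y)\bigr]=\max_{U,W}r\bigl(S_A-S_BUS_D-S_CWS_E\bigr),
\end{align*}
and likewise for the minimum, where $U\in\mathbb{H}^{p_1\times q_1}$, $W\in\mathbb{H}^{p_2\times q_2}$ are now arbitrary. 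Also, for any block matrix built from $A,B,C,D,E$ with zero blocks, the same trick replaces it by the corresponding block matrix in $S_A,S_B,S_C,S_D,S_E$ of the same rank; e.g. $r\left[\begin{smallmatrix}A&B\\ E&0\end{smallmatrix}\right]=r\left[\begin{smallmatrix}S_A&S_B\\ S_E&0\end{smallmatrix}\right]$.

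\medskip

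\noindent\textbf{Step 2: Block multiplication and elimination.} Partitioning $U$ (resp. $W$) conformally with the column partition of $S_B$ and row partition of $S_D$ (resp. of $S_C$ and $S_E$) and multiplying out, $S_BUS_D$ and $S_CWS_E$ are $7\times7$ block matrices supported only on the first/second/third (resp. first/third/fourth) block rows and columns. The two facts that drive the proof are: (i) the pivots $I_{m_1},I_{m_5},I_{m_6}$ of $S_A$ are untouched; and (ii) in the central block (row sizes $m_2,m_3,m_4$, column sizes $n_2,n_3,n_4$, carrying the $A_i$) the perturbation blocks hitting $A_1,A_2,A_4,A_5,A_6,A_8,A_9$ are mutually independent free sub-blocks of $U$ and $W$, while the blocks carrying $A_3$ (position $(2,4)$ of $S_A$) and $A_7$ (position $(4,2)$) receive nothing. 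Using $I_{m_1},I_{m_5},I_{m_6}$ for block Gauss--Jordan elimination (they also absorb the $B_1,C_1,C_2,D_1,E_1,E_2$ clutter in the first block row/column) and deleting the zero seventh block row/column yields
\begin{align*}
r\bigl(S_A-S_BUS_D-S_CWS_E\bigr)=m_1+m_5+m_6+r(M),\qquad
M=\begin{bmatrix}Z_{11}&Z_{12}&A_3\\ Z_{21}&Z_{22}&Z_{23}\\ A_7&Z_{32}&Z_{33}\end{bmatrix},
\end{align*}
where $M$ has row sizes $m_2,m_3,m_4$ and column sizes $n_2,n_3,n_4$ and the seven blocks $Z_{ij}$ range independently over all matrices of the appropriate size.

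\medskip

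\noindent\textbf{Step 3: Extreme ranks of $M$ and substitution.} For the minimum, permuting the second block row and second block column of $M$ to the bottom/right exhibits $M$ with a fully free border around the core $N=\left[\begin{smallmatrix}Z_{11}&A_3\\ A_7&Z_{33}\end{smallmatrix}\right]$; since $\min_{\text{free border}}r(M)=r(N)$ and, over $\mathbb{H}$, $\min_{X,Y}r\left[\begin{smallmatrix}X&A_3\\ A_7&Y\end{smallmatrix}\right]=\max\{r(A_3),r(A_7)\}$ (proved via rank factorizations of $A_3$ and $A_7$), we get $\min_Z r(M)=\max\{r(A_3),r(A_7)\}$. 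For the maximum, the dimension bounds $r(M)\le m_2+m_3+m_4$ and $r(M)\le n_2+n_3+n_4$ together with the bound from block row $1$, $[Z_{11}\ Z_{12}\ A_3]$, whose maximal rank is $\min\{m_2,\,n_2+n_3+r(A_3)\}$, and the bound from block row $3$, $[A_7\ Z_{32}\ Z_{33}]$, with maximal rank $\min\{m_4,\,n_3+n_4+r(A_7)\}$, give four upper bounds, and one shows the smallest of them is attained by an explicit choice of the $Z_{ij}$. It then remains to insert the values of $m_i,n_j$ from \eqref{equ022}--\eqref{equ026} and the values $r(A_3)=r\left[\begin{smallmatrix}A&C\\ D&0\end{smallmatrix}\right]-r\left[\begin{smallmatrix}A&B&C\\ D&0&0\end{smallmatrix}\right]-r\left[\begin{smallmatrix}A&C\\ D&0\\ E&0\end{smallmatrix}\right]+r\left[\begin{smallmatrix}A&B&C\\ D&0&0\\ E&0&0\end{smallmatrix}\right]$ and $r(A_7)=r\left[\begin{smallmatrix}A&B\\ E&0\end{smallmatrix}\right]-r\left[\begin{smallmatrix}A&B&C\\ E&0&0\end{smallmatrix}\right]-r\left[\begin{smallmatrix}A&B\\ D&0\\ E&0\end{smallmatrix}\right]+r\left[\begin{smallmatrix}A&B&C\\ D&0&0\\ E&0&0\end{smallmatrix}\right]$ — which I would obtain by computing $r\left[\begin{smallmatrix}S_A&S_C\\ S_D&0\end{smallmatrix}\right]$ and $r\left[\begin{smallmatrix}S_A&S_B\\ S_E&0\end{smallmatrix}\right]$ from the explicit block forms, exactly as in the ``Especially, we have \ldots'' lines in the proof of Theorem~\ref{theorem01} — and simplify $m_1+m_5+m_6+\max_Z r(M)$ and $m_1+m_5+m_6+\min_Z r(M)$ by a sequence of Frobenius-type rank identities to the stated expressions.

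\medskip

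\noindent\textbf{Where the difficulty lies.} The main obstacle is twofold. First, the block arithmetic of $S_BUS_D$ and $S_CWS_E$ must be carried out with enough care to see precisely which $A_i$-blocks become free and that $A_3,A_7$ survive as the only constraints: the overlapping index partitions of $S_B,S_C,S_D,S_E$ make this step error-prone. Second is the rank bookkeeping: verifying the achievability of the maximum and then reducing $m_1+m_5+m_6+\max_Z r(M)$, $m_1+m_5+m_6+\min_Z r(M)$ to the published closed forms. For the maximum, one must in particular handle the degenerate ranges in which some $m_i$ or $n_j$ is so small that $r(A_3)$ or $r(A_7)$ becomes the active constraint rather than the crude dimension bounds; for the minimum, one needs the identities for $r(A_3)$ and $r(A_7)$ above, whose derivation is the most computational part.
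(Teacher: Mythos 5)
Your proposal is correct and takes essentially the same route as the paper's own proof: reduction via Theorem \ref{theorem01}, elimination with the pivots $I_{m_1},I_{m_5},I_{m_6}$ to arrive at the partially free block matrix with only $A_3$ and $A_7$ fixed, the same four upper bounds and the value $\max\{r(A_3),r(A_7)\}$ for the minimum, and back-substitution using (\ref{equ022})--(\ref{equ026}) together with the identities (\ref{equ0033})--(\ref{equ0034}), with which your expressions for $r(A_3)$ and $r(A_7)$ agree. The only difference is cosmetic: you organize the minimum as a free-border-plus-core completion instead of the paper's explicit Case 1 construction.
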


\begin{proof}From Theorem \ref{theorem01}, we can rewrite the expression $p(X,Y)$ in (\ref{fun02}) in the following canonical
form
\begin{align*}
p(X,Y)=P(S_{A}-S_{B}T_{1}XV_{1}S_{D}-S_{C}T_{2}YV_{2}S_{E})Q.
\end{align*}
Because $P,Q$ are nonsingular, the rank of $p(X,Y)$   can be written as
\begin{align*}
r[p(X,Y)]=r(S_{A}-S_{B}T_{1}XV_{1}S_{D}-S_{C}T_{2}YV_{2}S_{E}).
\end{align*}
Put $\widehat{X}=T_{1}XV_{1},\widehat{Y}=T_{2}YV_{2}$. Partition the   matrices $\widehat{X}$ and $\widehat{Y}$
\begin{align*}
\widehat{X}=\begin{bmatrix}X_{1}&X_{2}&X_{3}\\X_{4}&X_{5}&X_{6}\\X_{7}&X_{8}&X_{9}\end{bmatrix},
\widehat{Y}=\begin{bmatrix}Y_{1}&Y_{2}&Y_{3}\\Y_{4}&Y_{5}&Y_{6}\\Y_{7}&Y_{8}&Y_{9}\end{bmatrix}.
\end{align*}
Hence,
\begin{align*}
S_{B}\widehat{X}S_{D}=\left[
\begin{array}
[c]{ccccccc}%
* & * & * & 0 & 0 &0 & 0\\
* & X_{1} & X_{2} & 0 &   0 & 0 & 0\\
* & X_{4} & X_{5} & 0 & 0 & 0 & 0\\
0 & 0 & 0 & 0 & 0 & 0 & 0\\
0 & 0 & 0 & 0 & 0 & 0 & 0\\
0& 0 & 0 & 0 &  0 & 0 & 0 \\
0 & 0 & 0 & 0 &   0 & 0 & 0
\end{array}
\right],
S_{C}\widehat{Y}S_{E}=\left[
\begin{array}
[c]{ccccccc}%
* & 0 & * & * & 0 &0 & 0\\
0 & 0 & 0 & 0 &   0 & 0 & 0\\
* & 0 & Y_{5} & Y_{6} & 0 & 0 & 0\\
* & 0 & Y_{8} & Y_{9} & 0 &0& 0\\
0 & 0 & 0 & 0 & 0 & 0 & 0\\
0& 0 & 0 & 0 &  0 & 0 & 0 \\
0 & 0 & 0 & 0 &   0 & 0 & 0
\end{array}
\right].
\end{align*}
Through the new notation, the matrix expression $A-BXD-CYE$
can be rewritten as
\begin{align*}
S_{A}-S_{B}\widehat{X}S_{D}-S_{C}\widehat{Y}S_{E}=\left[
\begin{array}
[c]{ccccccc}%
* & * & * & * & 0 & I_{m_{1}} & 0\\
* & A_{1}-X_{1} & A_{2}-X_{2} & A_{3} &   0 & 0 & 0\\
* & A_{4}-X_{4} & A_{5}-X_{5}-Y_{5} & A_{6}-Y_{6} & 0 & 0 & 0\\
* & A_{7} & A_{8}-Y_{8} & A_{9}-Y_{9} & 0 & 0 & 0\\
0 & 0 & 0 & 0 & I_{m_{5}} & 0 & 0\\
I_{m_{6}} & 0 & 0 & 0 &  0 & 0 & 0 \\
0 & 0 & 0 & 0 &   0 & 0 & 0
\end{array}
\right].
\end{align*}
Obviously,
\begin{align*}
r(S_{A}-S_{B}\widehat{X}S_{D}-S_{C}\widehat{Y}S_{E})=m_{1}+m_{5}+m_{6}+r(\Omega),
\end{align*}
where
\begin{align}\label{equ035}
\Omega=\left[
\begin{array}
[c]{ccc}%
A_{1}-X_{1} & A_{2}-X_{2} & A_{3} \\
A_{4}-X_{4} & A_{5}-X_{5}-Y_{5}&A_{6}-Y_{6}\\
 A_{7} & A_{8}-Y_{8} & A_{9}-Y_{9}
\end{array}
\right]:=\left[
\begin{array}
[c]{ccc}%
Z_{1} & Z_{2} & A_{3} \\
Z_{3} & Z_{4}&Z_{5}\\
 A_{7} & Z_{6} & Z_{7}
\end{array}
\right],
\end{align}
where $Z_{1},\cdots,Z_{7}$ are arbitrary real quaternion matrices. It is easily seen from (\ref{equ035}) that
\begin{align}\label{equ036}
&\mathop {\max }  \left\{ r(A_{3}),r(A_{7})\right\}\leq r(\Omega)\leq \nonumber\\&
\mathop {\min }  \left\{ m_{2}+m_{3}+m_{4},n_{2}+n_{3}+n_{4},r(A_{3})+n_{2}+n_{3}+m_{3}+m_{4},r(A_{7})+n_{3}+n_{4}+m_{2}+m_{3}\right\}.
\end{align}
Now, we choose $Z_{1},\cdots,Z_{7}$ such that $\Omega$ reach the upper and lower bounds in
 (\ref{equ036}). There exist nonsingular matrices $P_{1},Q_{1},P_{2},Q_{2}$ such that
\begin{align*}
P_{1}A_{3}Q_{1}=\begin{bmatrix}I_{a}&0\\0&0\end{bmatrix},~
P_{2}A_{7}Q_{2}=\begin{bmatrix}I_{b}&0\\0&0\end{bmatrix}.
\end{align*}

Case 1. Assume that $b=r(A_{7})\leq r(A_{3})=a.$ Set
\begin{align*}
Z_{2}=0,Z_{3}=0,Z_{4}=0,Z_{5}=0,Z_{6}=0.
\end{align*}
Then
\begin{align*}
\Omega=\left[
\begin{array}
[c]{cc}%
Z_{1}&A_{3}\\A_{7}&Z_{7}
\end{array}
\right].
\end{align*}
Denote
\begin{align*}
P_{1}Z_{1}Q_{2}=\begin{bmatrix}W_{1}&W_{2}\\W_{3}&W_{4}\\W_{5}&W_{6}\end{bmatrix},~
P_{2}Z_{7}Q_{1}=\begin{bmatrix}U_{1}&U_{2}&U_{3}\\U_{4}&U_{5}&U_{6}\end{bmatrix},
\end{align*}
where $W_{1},\cdots,W_{4},U_{1},\cdots,U_{4}$ are arbitrary real quaternion matrices. Then we have
\begin{align*}
r(\Omega)=r\begin{bmatrix}W_{1}&W_{2}&I_{b}&0&0\\W_{3}&W_{4}&0&I_{a-b}&0\\W_{5}&W_{6}&0&0&0\\
I_{b}&0&U_{1}&U_{2}&U_{3}\\0&0&U_{4}&U_{5}&U_{6}\end{bmatrix}.
\end{align*}
Set
\begin{align*}
W_{1}=I_{b},W_{2}=0,W_{3}=0,W_{4}=0,W_{5}=0,W_{6}=0,\\
U_{1}=I_{b},U_{2}=0,U_{3}=0,U_{4}=0,U_{5}=0,U_{6}=0.
\end{align*}
Then $r(\Omega)=a=r(A_{3})$. The case $r(A_{7})\geq r(A_{3})$ can be shown similarly. Hence,
$
\mathop {\max }  \left\{ r(A_{3}),r(A_{7})\right\}\leq r(\Omega).
$

Case 2. Assume that
\begin{align*}
m_{2}+m_{3}+m_{4} \leq \mathop {\min }  \left\{ n_{2}+n_{3}+n_{4},r(A_{3})+n_{2}+n_{3}+m_{3}+m_{4},r(A_{7})+n_{3}+n_{4}+m_{2}+m_{3}\right\}.
\end{align*}
Then
\begin{align*}
\Omega=
\left[
\begin{array}
[c]{ccccc}%
W_{1}&W_{2}&W_{3}&I_{a}&0\\
W_{4}&W_{5}&W_{6}&0&0\\
W_{7}&W_{8}&W_{9}&W_{10}&W_{11}\\
I_{b}&0&W_{12}&W_{13}&W_{14}\\
0&0&W_{15}&W_{16}&W_{17}
\end{array}
\right].
\end{align*}
where $W_{1},\cdots,W_{17}$ are arbitrary real quaternion matrices. Set
\begin{align*}
&\begin{bmatrix}W_{4}&W_{5}&W_{6}\end{bmatrix}=\begin{bmatrix}I_{m_{4}-b}&0\end{bmatrix},
\begin{bmatrix}W_{11}\\W_{14}\\W_{17}\end{bmatrix}=\begin{bmatrix}I_{m_{3}+b+m_{2}-a}&0\end{bmatrix},
\\&W_{i}=0,i=1,2,3,7,8,9,10,12,13,15,16.
\end{align*}
Then we have
$
r(\Omega)=m_{2}+m_{3}+m_{4}.
$
The case
\begin{align*}
n_{2}+n_{3}+n_{4} \leq \mathop {\min }  \left\{ m_{2}+m_{3}+m_{4},r(A_{3})+n_{2}+n_{3}+m_{3}+m_{4},r(A_{7})+n_{3}+n_{4}+m_{2}+m_{3}\right\}.
\end{align*}
 can be shown similarly.

 Case 3. Assume that
\begin{align*}
r(A_{3})+n_{2}+n_{3}+m_{3}+m_{4}\leq \mathop {\min }  \left\{m_{2}+m_{3}+m_{4}, n_{2}+n_{3}+n_{4},r(A_{7})+n_{3}+n_{4}+m_{2}+m_{3}\right\}.
\end{align*}
Then we have $m_{2}-a\geq n_{2}+n_{3}$ and $n_{4}-a\geq m_{3}+m_{4}$.  Set
\begin{align*}
\begin{bmatrix}W_{4}&W_{5}&W_{6}\end{bmatrix}=\begin{bmatrix}I_{n_{2}+n_{3}}\\0\end{bmatrix},
\begin{bmatrix}W_{11}\\W_{14}\\W_{17}\end{bmatrix}=\begin{bmatrix}I_{m_{3}+m_{4}}&0\end{bmatrix},
W_{i}=0,i=1,2,3,7,8,9,10,12,13,15,16.
\end{align*}
Then we have
\begin{align*}
r(\Omega)=a+n_{2}+n_{3}+m_{3}+m_{4}=r(A_{3})+n_{2}+n_{3}+m_{3}+m_{4}.
\end{align*}
The case
\begin{align*}
r(A_{7})+n_{3}+n_{4}+m_{2}+m_{3}\leq \mathop {\min }  \left\{m_{2}+m_{3}+m_{4}, n_{2}+n_{3}+n_{4},r(A_{3})+n_{2}+n_{3}+m_{3}+m_{4}\right\}.
\end{align*}
 can be shown similarly.

Hence,
\begin{align*}
&m_{1}+m_{5}+m_{6}+\mathop {\max }  \left\{ r(A_{3}),r(A_{7})\right\}\leq r(A-BXD-CYE)\leq m_{1}+m_{5}+m_{6}+ \nonumber\\&
\mathop {\min }  \left\{ m_{2}+m_{3}+m_{4},n_{2}+n_{3}+n_{4},r(A_{3})+n_{2}+n_{3}+m_{3}+m_{4},r(A_{7})+n_{3}+n_{4}+m_{2}+m_{3}\right\}.
\end{align*}
It follows from Theorem \ref{theorem01} that
\begin{align}\label{equ0033}
m_{1}+m_{2}+m_{3}+r(A_{7})+m_{5}+m_{6}+n_{3}+n_{4}=r\begin{bmatrix}A&B\\E&0\end{bmatrix},
\end{align}
\begin{align}\label{equ0034}
m_{1}+n_{2}+n_{3}+m_{3}+m_{4}+r(A_{3})+m_{5}+m_{6}=r\begin{bmatrix}A&C\\D&0\end{bmatrix}.
\end{align}
Combining (\ref{equ022})-(\ref{equ026}), (\ref{equ0033}) and (\ref{equ0034}), we can obtain the results.

\end{proof}

\begin{corollary}\label{coro02}
Let $A,B,C,D,E$ be given, and $p(X,Y)$ be as given in (\ref{fun02}). Assume that
\begin{align*}
\mathcal{R}(B)\subseteq \mathcal{R}(C),~\mathcal{R}(E^{*})\subseteq \mathcal{R}(D^{*}).
\end{align*}
Then
\begin{align*}
 \mathop {\max }\limits_{ X,Y } r\left[ {p\left( {X,Y} \right)} \right]
 = \min \left\{ r\left[ {\begin{array}{*{20}{c}}
   A   \\
  D
\end{array}} \right],r\left[ {\begin{array}{*{20}{c}}
   A &C
\end{array}} \right],
 r\left[ {\begin{array}{*{20}{c}}
   A & B  \\
   E & 0
\end{array}} \right]
\right\} ,
\end{align*}
\begin{align*}
\mathop {\min}\limits_{ X,Y } r\left[ {p\left( {X,Y} \right)} \right]
 &= r\left[ {\begin{array}{*{20}{c}}
   A  \\
   D
\end{array}} \right] + r\left[ {\begin{array}{*{20}{c}}
   A &C
\end{array}} \right]
  + r\left[ {\begin{array}{*{20}{c}}
   A & B  \\
  E&0
\end{array}} \right] - r\left[ {\begin{array}{*{20}{c}}
   A &C \\
   E&0
\end{array}}\right] - r\left[ {\begin{array}{*{20}{c}}
   A & B  \\
   D&0
\end{array}} \right].
\end{align*}
By the method mentioned  in the proof of Theorem \ref{theorem02}, We can choose variable real quaternion matrices $X$ and $Y$
such that the real quaternion matrix expression  attains its maximal and minimal ranks.
\end{corollary}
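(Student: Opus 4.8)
The plan is to read the corollary off directly from \thmref{theorem02}, using the two range hypotheses only to collapse the block-rank quantities appearing there. Since $\mathcal{R}(B)\subseteq\mathcal{R}(C)$ there is a quaternion matrix $F$ with $B=CF$, and since $\mathcal{R}(E^{*})\subseteq\mathcal{R}(D^{*})$ there is a quaternion matrix $G$ with $E=GD$. Adding $-CF$ to the $B$-block (column operations) gives $r[A\ B\ C]=r[A\ C]$, $r\begin{bmatrix}A&B&C\\D&0&0\end{bmatrix}=r\begin{bmatrix}A&C\\D&0\end{bmatrix}$ and $r\begin{bmatrix}A&B&C\\E&0&0\end{bmatrix}=r\begin{bmatrix}A&C\\E&0\end{bmatrix}$; adding $-GD$ to the $E$-block (row operations) gives $r\begin{bmatrix}A\\D\\E\end{bmatrix}=r\begin{bmatrix}A\\D\end{bmatrix}$, $r\begin{bmatrix}A&B\\D&0\\E&0\end{bmatrix}=r\begin{bmatrix}A&B\\D&0\end{bmatrix}$ and $r\begin{bmatrix}A&C\\D&0\\E&0\end{bmatrix}=r\begin{bmatrix}A&C\\D&0\end{bmatrix}$. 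This part is pure bookkeeping.

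For the maximal rank, substituting these identities into \thmref{theorem02} turns the four candidate terms into $r\begin{bmatrix}A\\D\end{bmatrix}$, $r[A\ C]$, $r\begin{bmatrix}A&B\\E&0\end{bmatrix}$ and $r\begin{bmatrix}A&C\\D&0\end{bmatrix}$. Deleting the block column $\begin{bmatrix}C\\0\end{bmatrix}$ shows $r\begin{bmatrix}A&C\\D&0\end{bmatrix}\ge r\begin{bmatrix}A\\D\end{bmatrix}$, so the fourth term is never the minimum and drops out, giving the claimed value of $\max_{X,Y}r[p(X,Y)]$.

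For the minimal rank, the leading sum $r\begin{bmatrix}A\\D\\E\end{bmatrix}+r[A\ B\ C]$ becomes $r\begin{bmatrix}A\\D\end{bmatrix}+r[A\ C]$; the first argument $\alpha$ of the inner maximum in \thmref{theorem02} becomes $r\begin{bmatrix}A&B\\E&0\end{bmatrix}-r\begin{bmatrix}A&C\\E&0\end{bmatrix}-r\begin{bmatrix}A&B\\D&0\end{bmatrix}$, and the second argument $\beta$ collapses to $-r\begin{bmatrix}A&C\\D&0\end{bmatrix}$. The one non-mechanical point is to show $\alpha\ge\beta$, i.e. $r\begin{bmatrix}A&B\\E&0\end{bmatrix}+r\begin{bmatrix}A&C\\D&0\end{bmatrix}\ge r\begin{bmatrix}A&C\\E&0\end{bmatrix}+r\begin{bmatrix}A&B\\D&0\end{bmatrix}$. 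Setting $M=\begin{bmatrix}A&C\\D&0\end{bmatrix}$, $U=\operatorname{diag}(I,G)$ and $V=\operatorname{diag}(I,F)$, one checks $UM=\begin{bmatrix}A&C\\E&0\end{bmatrix}$, $MV=\begin{bmatrix}A&B\\D&0\end{bmatrix}$ and $UMV=\begin{bmatrix}A&B\\E&0\end{bmatrix}$, so the desired inequality is exactly the Frobenius rank inequality $r(UM)+r(MV)\le r(M)+r(UMV)$. Hence $\max\{\alpha,\beta\}=\alpha$, which produces the stated value of $\min_{X,Y}r[p(X,Y)]$. The explicit optimal $X$ and $Y$ are obtained by specializing the constructions used in the proof of \thmref{theorem02} to the now-simplified quantities.

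The main obstacle is precisely the inequality $\alpha\ge\beta$: without it one would be left with a maximum of two expressions rather than a single closed form, and the key realization is that, after the factorizations $B=CF$ and $E=GD$, this inequality is nothing but a disguised instance of the Frobenius rank inequality. One should also note that the Frobenius (and Sylvester) rank inequalities are available over $\mathbb{H}$, since rank over the quaternions is governed by invertible left and right multipliers in the same way as over a field.
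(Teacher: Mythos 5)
Your proposal is correct, and it is essentially the intended argument: the paper states this corollary without a separate proof, expecting it to be read off from Theorem~\ref{theorem02} once the hypotheses $\mathcal{R}(B)\subseteq\mathcal{R}(C)$ and $\mathcal{R}(E^{*})\subseteq\mathcal{R}(D^{*})$ are used, via $B=CF$ and $E=GD$, to delete the redundant blocks from all the rank quantities --- exactly your bookkeeping step, including discarding the term $r\begin{bmatrix}A&C\\D&0\end{bmatrix}\ge r\begin{bmatrix}A\\D\end{bmatrix}$ from the four-term minimum. The one point the paper leaves tacit is why the two-term inner maximum collapses to a single closed form, and your verification that $\alpha\ge\beta$ is precisely the Frobenius inequality $r(UM)+r(MV)\le r(M)+r(UMV)$ with $M=\begin{bmatrix}A&C\\D&0\end{bmatrix}$, $U=\operatorname{diag}(I,G)$, $V=\operatorname{diag}(I,F)$ settles this correctly, and it is indeed available over $\mathbb{H}$. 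An equivalent route, closer to the wording of the corollary's last sentence, is to work inside the decomposition of Theorem~\ref{theorem01}: the hypotheses force $m_{2}=0$ and $n_{4}=0$ in (\ref{equ022})--(\ref{equ026}), so the block $A_{3}$ (of size $m_{2}\times n_{4}$) is vacuous, the bound $\max\{r(A_{3}),r(A_{7})\}$ in the proof of Theorem~\ref{theorem02} reduces to $r(A_{7})$, and translating back through (\ref{equ0033}) and the parameter identities gives the same two formulas together with explicit optimal $X,Y$; your black-box use of Theorem~\ref{theorem02} plus Frobenius buys the same conclusion with less recomputation.
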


\begin{corollary}\label{coro01}
Let $A\in \mathbb{H}^{m\times n},B\in \mathbb{H}^{m\times k},C\in \mathbb{H}^{l\times n}$ be given, and
\begin{align}\label{fun04}
f_{1}(X,Y)=A-BX -YC.
\end{align}
Then
\begin{align*}
 \mathop {\max }\limits_{ X,Y } r\left[ {f_{1}\left( {X,Y} \right)} \right]
 = \min \left\{ m,n,
 r\left[ {\begin{array}{*{20}{c}}
   A & B  \\
   C & 0
\end{array}} \right]  \right\} ,
\mathop {\min}\limits_{ X,Y } r\left[ {f_{1}\left( {X,Y} \right)} \right]
 = r\left[ {\begin{array}{*{20}{c}}
   A &B \\
   C&0
\end{array}} \right] -r(B)-r(C).
\end{align*}
By the method mentioned  in the proof of Theorem \ref{theorem02}, We can choose variable real quaternion matrices $X$ and $Y$
such that the real quaternion matrix expression (\ref{fun04}) attains its maximal and minimal ranks.

\end{corollary}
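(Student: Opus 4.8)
The plan is to derive Corollary~\ref{coro01} as a special case of the machinery behind Theorem~\ref{theorem02} rather than from the formulas of Theorem~\ref{theorem02} verbatim, since the shape of $f_1(X,Y)=A-BX-YC$ differs from $p(X,Y)=A-BXD-CYE$. First I would observe that $f_1$ is the instance of $p$ obtained by taking $D=I_n$ and $E=I_m$ (equivalently, by absorbing the identity factors), so that the simultaneous decomposition of Theorem~\ref{theorem01} applied to the array with $C$ replaced by the row block $C$ and with the ``$D$'' and ``$E$'' slots occupied by identities collapses many of the block sizes: several of the rank-difference quantities $m_i,n_i$ in \eqref{equ022}--\eqref{equ026} become trivial. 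Concretely, with $E=I$ the column space condition forces the $E$-related blocks to vanish, and similarly for $D=I$; I would track which of $m_1,\dots,m_6,n_2,n_3,n_4$ survive and show the canonical form reduces to a $2\times2$-type block pattern.

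The cleaner route, which I would actually write out, is direct: find nonsingular $P,Q,T,V$ with
\begin{align*}
PBT=\begin{bmatrix}I_{r(B)}&0\\0&0\end{bmatrix},\qquad VCQ=\begin{bmatrix}I_{r(C)}&0\\0&0\end{bmatrix},
\end{align*}
after first using a common transformation so that $B$ and $C$ are simultaneously brought to these normal forms while $PAQ$ is partitioned conformably into a $2\times2$ block matrix $PAQ=\begin{bmatrix}A_{11}&A_{12}\\A_{21}&A_{22}\end{bmatrix}$ with $A_{11}$ of size $r(B)\times r(C)$. Writing $\widehat X=T^{-1}X... $ appropriately, $BX$ only affects the first $r(B)$ rows and $YC$ only the first $r(C)$ columns, so
\begin{align*}
r(A-BX-YC)=r\begin{bmatrix}Z_1&Z_2\\Z_3&A_{22}\end{bmatrix},
\end{align*}
where $Z_1,Z_2,Z_3$ are free (the argument that these become free matrices is exactly the elimination step used in the proof of Theorem~\ref{theorem02}). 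Then the classical fact $\min_{Z_1,Z_2,Z_3} r\begin{bmatrix}Z_1&Z_2\\Z_3&A_{22}\end{bmatrix}=r(A_{22})$ and $\max=\min\{(\text{rows}),(\text{cols})\}$ gives the extreme ranks in terms of $r(A_{22})$ and the block dimensions.

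It then remains to translate $r(A_{22})$ and the dimensions back into the stated invariants. Here I would use that the block-elimination identity gives
\begin{align*}
r\begin{bmatrix}A&B\\C&0\end{bmatrix}=r(B)+r(C)+r(A_{22}),
\end{align*}
so $r(A_{22})=r\begin{bmatrix}A&B\\C&0\end{bmatrix}-r(B)-r(C)$, which is exactly the claimed minimum. For the maximum, the number of rows of the free block $\begin{bmatrix}Z_1&Z_2\\Z_3&A_{22}\end{bmatrix}$ is $m$ and the number of columns is $n$, while the upper bound $r\begin{bmatrix}A&B\\C&0\end{bmatrix}$ coming from $r(A_{22})+(\text{rows of }A_{22})+(\text{cols of }A_{22})$ matches; taking the minimum of the row bound $m$, the column bound $n$, and this bound yields the stated formula. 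Finally, the explicit optimal choices of $X,Y$ come from unwinding $\widehat X,\widehat Y$ back through $T,V$, precisely as in Case~1 and Cases~2--3 of the proof of Theorem~\ref{theorem02}. I expect the only mildly delicate point to be bookkeeping the identification $m=$ rows, $n=$ columns with the surviving block sizes so that the four-term minimum of Theorem~\ref{theorem02} genuinely collapses to the three-term minimum here; once the simultaneous normal form of $(A;B;C)$ is set up correctly this is routine.
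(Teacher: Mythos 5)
Your direct argument is correct, but it is a genuinely different route from the paper's. The paper offers no separate proof of this corollary: it is meant to follow by specializing Theorem~\ref{theorem02}, i.e.\ writing $A-BX-YC$ as $A-BXD-CYE$ with $D=I_n$, with the given left coefficient of the $Y$-term taken to be $I_m$, and with $E$ taken to be $C$, after which the four-term max formula and the two-term min formula collapse (e.g.\ $r\bigl[\begin{smallmatrix}A\\ I_n\\ C\end{smallmatrix}\bigr]=n$, $r\bigl[\begin{smallmatrix}A&B&I_m\end{smallmatrix}\bigr]=m$) to the stated expressions. Note that your first-paragraph identification ``$D=I_n$ and $E=I_m$'' is not the right substitution ($E$ must have $n$ columns, and $CYE$ with $E=I_n$ gives $CY$, not $YC$); since you abandon that route it does no harm, but it should be fixed if kept. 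Your chosen route instead redoes the argument from scratch: separate equivalence normal forms $PBT=\bigl[\begin{smallmatrix}I_{r(B)}&0\\ 0&0\end{smallmatrix}\bigr]$, $VCQ=\bigl[\begin{smallmatrix}I_{r(C)}&0\\ 0&0\end{smallmatrix}\bigr]$ (no simultaneity issue, since $P$ only matches rows of $A,B$ and $Q$ only columns of $A,C$), reduction to a $2\times 2$ block matrix $\bigl[\begin{smallmatrix}Z_1&Z_2\\ Z_3&A_{22}\end{smallmatrix}\bigr]$ with $Z_1,Z_2,Z_3$ free, and the completion facts $\min r=r(A_{22})$, $\max r=\min\{m,n,\,r(B)+r(C)+r(A_{22})\}$ together with $r\bigl[\begin{smallmatrix}A&B\\ C&0\end{smallmatrix}\bigr]=r(B)+r(C)+r(A_{22})$. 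One small correction there: the third upper bound is $r(A_{22})$ plus the number of \emph{free} rows $r(B)$ and free columns $r(C)$, not ``rows/cols of $A_{22}$'' as you wrote; with that relabeling your identification with $r\bigl[\begin{smallmatrix}A&B\\ C&0\end{smallmatrix}\bigr]$ is exactly right. The trade-off: the paper's specialization is shorter and reuses Theorem~\ref{theorem02} wholesale (including the explicit optimal $X,Y$), while your argument is self-contained, avoids the heavier five-matrix decomposition of Theorem~\ref{theorem01}, and makes the optimal choices of $X,Y$ transparent directly from the free blocks.
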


\begin{theorem}\label{theorem04}
Let $A\in \mathbb{H}^{m\times n},B_{1}\in \mathbb{H}^{m\times k},C_{2}\in \mathbb{H}^{l\times n},
B_{3}\in \mathbb{H}^{m\times k_{3}},C_{3}\in \mathbb{H}^{l_{3}\times n},
B_{4}\in \mathbb{H}^{m\times k_{4}},C_{4}\in \mathbb{H}^{l_{4}\times n}$ be given, and
$f_{2}(X_{1},X_{2},X_{3},X_{4})$ be as given in (\ref{fun03}).
Then
\begin{align*}
 \mathop {\max }\limits_{\left\{ {{X_i}} \right\}} r\left[ {f_{2}\left( {{X_1},{X_2},{X_3},{X_4}} \right)} \right]
 =& \min \Bigg\{ m,n,r\left[ {\begin{array}{*{20}{c}}
   A & {{B_1}}  \\
   {{C_2}} & 0  \\
   {{C_3}} & 0  \\
   {{C_4}} & 0  \\
\end{array}} \right],r\left[ {\begin{array}{*{20}{c}}
   A & {{B_1}} & {{B_3}} & {{B_4}}  \\
   {{C_2}} & 0 & 0 & 0  \\
\end{array}} \right], \nonumber\\
 &r\left[ {\begin{array}{*{20}{c}}
   A & {{B_1}} & {{B_3}}  \\
   {{C_2}} & 0 & 0  \\
   {{C_4}} & 0 & 0  \\
\end{array}} \right],r\left[ {\begin{array}{*{20}{c}}
   A & {{B_1}} & {{B_4}}  \\
   {{C_2}} & 0 & 0  \\
   {{C_3}} & 0 & 0  \\
\end{array}} \right] \Bigg\} ,
\end{align*}
\begin{align*}
 \mathop {\min }\limits_{\left\{ {{X_i}} \right\}} r\left[ {f_{2}\left( {{X_1},{X_2},{X_3},{X_4}} \right)} \right]
 &= r\left[ {\begin{array}{*{20}{c}}
   A & {{B_1}}  \\
   {{C_2}} & 0  \\
   {{C_3}} & 0  \\
   {{C_4}} & 0  \\
\end{array}} \right] + r\left[ {\begin{array}{*{20}{c}}
   A & {{B_1}} & {{B_3}} & {{B_4}}  \\
   {{C_2}} & 0 & 0 & 0  \\
\end{array}} \right] - r({B_1}) - r({C_2})
\end{align*}
\begin{align*}
  + \max \left\{ r\left[ {\begin{array}{*{20}{c}}
   A & {{B_1}} & {{B_3}}  \\
   {{C_2}} & 0 & 0  \\
   {{C_4}} & 0 & 0  \\
\end{array}} \right] - r\left[ {\begin{array}{*{20}{c}}
   A & {{B_1}} & {{B_3}} & {{B_4}}  \\
   {{C_2}} & 0 & 0 & 0  \\
   {{C_4}} & 0 & 0 & 0  \\
\end{array}} \right] - r\left[ {\begin{array}{*{20}{c}}
   A & {{B_1}} & {{B_3}}  \\
   {{C_2}} & 0 & 0  \\
   {{C_3}} & 0 & 0  \\
   {{C_4}} & 0 & 0  \\
\end{array}} \right], \right.
\end{align*}
\begin{align*}
 \left. r\left[ {\begin{array}{*{20}{c}}
   A & {{B_1}} & {{B_4}}  \\
   {{C_2}} & 0 & 0  \\
   {{C_3}} & 0 & 0  \\
\end{array}} \right] - r\left[ {\begin{array}{*{20}{c}}
   A & {{B_1}} & {{B_3}} & {{B_4}}  \\
   {{C_2}} & 0 & 0 & 0  \\
   {{C_3}} & 0 & 0 & 0  \\
\end{array}} \right] - r\left[ {\begin{array}{*{20}{c}}
   A & {{B_1}} & {{B_4}}  \\
   {{C_2}} & 0 & 0  \\
   {{C_3}} & 0 & 0  \\
   {{C_4}} & 0 & 0  \\
\end{array}} \right]\right\} .
\end{align*}
\end{theorem}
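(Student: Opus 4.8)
The plan is to reduce the four-term expression $f_{2}$ to the two-term expression $p(X,Y)=A-BXD-CYE$ of (\ref{fun02}), which is already handled in Theorem~\ref{theorem02}, by first eliminating the two one-sided variables $X_{1}$ and $X_{2}$ via Corollary~\ref{coro01}.

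First, regroup
\[
f_{2}(X_{1},X_{2},X_{3},X_{4})=\bigl(A-B_{3}X_{3}C_{3}-B_{4}X_{4}C_{4}\bigr)-B_{1}X_{1}-X_{2}C_{2},
\]
fix $X_{3},X_{4}$, and put $G=A-B_{3}X_{3}C_{3}-B_{4}X_{4}C_{4}\in\mathbb{H}^{m\times n}$. Then $f_{2}=G-B_{1}X_{1}-X_{2}C_{2}$ is exactly of the form of $f_{1}$ in Corollary~\ref{coro01} (with $G,B_{1},C_{2}$ playing the roles of $A,B,C$), so
\[
\min_{X_{1},X_{2}}r(f_{2})=r\begin{bmatrix}G&B_{1}\\C_{2}&0\end{bmatrix}-r(B_{1})-r(C_{2}),\qquad
\max_{X_{1},X_{2}}r(f_{2})=\min\Bigl\{m,\,n,\,r\begin{bmatrix}G&B_{1}\\C_{2}&0\end{bmatrix}\Bigr\}.
\]
Moreover
\[
\begin{bmatrix}G&B_{1}\\C_{2}&0\end{bmatrix}=\begin{bmatrix}A&B_{1}\\C_{2}&0\end{bmatrix}-\begin{bmatrix}B_{3}\\0\end{bmatrix}X_{3}\begin{bmatrix}C_{3}&0\end{bmatrix}-\begin{bmatrix}B_{4}\\0\end{bmatrix}X_{4}\begin{bmatrix}C_{4}&0\end{bmatrix}=:\mathcal{A}-\mathcal{B}X_{3}\mathcal{D}-\mathcal{C}X_{4}\mathcal{E},
\]
which is an instance of (\ref{fun02}) for the five matrices $\mathcal{A},\mathcal{B},\mathcal{C},\mathcal{D},\mathcal{E}$ of compatible sizes. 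Applying Theorem~\ref{theorem02} to $(\mathcal{A},\mathcal{B},\mathcal{C},\mathcal{D},\mathcal{E})$ yields $\max_{X_{3},X_{4}}$ and $\min_{X_{3},X_{4}}$ of $r\begin{bmatrix}G&B_{1}\\C_{2}&0\end{bmatrix}$ as explicit combinations of ranks of block matrices built from $\mathcal{A},\dots,\mathcal{E}$; elementary row and column operations identify each of these with a block matrix appearing in the statement, e.g.\ $r\begin{bmatrix}\mathcal{A}\\\mathcal{D}\\\mathcal{E}\end{bmatrix}=r\begin{bmatrix}A&B_{1}\\C_{2}&0\\C_{3}&0\\C_{4}&0\end{bmatrix}$, $r[\mathcal{A}\ \mathcal{B}\ \mathcal{C}]=r\begin{bmatrix}A&B_{1}&B_{3}&B_{4}\\C_{2}&0&0&0\end{bmatrix}$, $r\begin{bmatrix}\mathcal{A}&\mathcal{B}\\\mathcal{E}&0\end{bmatrix}=r\begin{bmatrix}A&B_{1}&B_{3}\\C_{2}&0&0\\C_{4}&0&0\end{bmatrix}$, and so on.

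It remains to combine the two optimization stages. For the minimum, $\min_{X_{1},\dots,X_{4}}=\min_{X_{3},X_{4}}\min_{X_{1},X_{2}}$ and $r(B_{1}),r(C_{2})$ are independent of $X_{3},X_{4}$, so $\min_{X_{i}}r(f_{2})=\bigl(\min_{X_{3},X_{4}}r\begin{bmatrix}G&B_{1}\\C_{2}&0\end{bmatrix}\bigr)-r(B_{1})-r(C_{2})$, which after the substitution above is the claimed minimal-rank formula. For the maximum, $\max_{X_{1},\dots,X_{4}}=\max_{X_{3},X_{4}}\max_{X_{1},X_{2}}$; since $t\mapsto\min\{m,n,t\}$ is nondecreasing and the inner maximum over $(X_{3},X_{4})$ is attained (the attainable ranks form a finite set), one may pull $\max_{X_{3},X_{4}}$ through $\min\{m,n,\cdot\}$ to get $\max_{X_{i}}r(f_{2})=\min\{m,n,\max_{X_{3},X_{4}}r\begin{bmatrix}G&B_{1}\\C_{2}&0\end{bmatrix}\}$; inserting the value from Theorem~\ref{theorem02} gives the claimed maximal-rank formula. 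The matrices $X_{1},\dots,X_{4}$ realizing either extremum are obtained by composing the explicit constructions of Corollary~\ref{coro01} and of the proof of Theorem~\ref{theorem02}.

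The argument is essentially bookkeeping once Theorem~\ref{theorem02} is granted; the only point requiring some care is the interchange of $\max_{X_{3},X_{4}}$ with $\min\{m,n,\cdot\}$ in the maximum case, which is legitimate precisely because $\min\{m,n,\cdot\}$ is monotone and the inner maximum is attained. I do not expect any genuinely new obstacle beyond those already resolved in Theorems~\ref{theorem01} and \ref{theorem02}.
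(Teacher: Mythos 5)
Your proposal is correct and follows essentially the same route as the paper: the paper likewise writes $\begin{bmatrix} A-B_{3}X_{3}C_{3}-B_{4}X_{4}C_{4} & B_{1}\\ C_{2} & 0\end{bmatrix}$ as an instance of $A-BXD-CYE$, applies Theorem \ref{theorem02} to choose $X_{3},X_{4}$ extremizing its rank, and then invokes Corollary \ref{coro01} to pick $X_{1},X_{2}$, which is exactly your two-stage optimization (your explicit remark on pulling $\max_{X_{3},X_{4}}$ through $\min\{m,n,\cdot\}$ is a minor point the paper leaves implicit).
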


\begin{proof}Our proof is just similar to  the proof in \cite{Tianlaa}.
Note that
\begin{align}\label{equ034}
\begin{bmatrix}
 A-B_{3}X_{3}C_{3}-B_{4}X_{4}C_{4} &B_{1} \\
   C_{2}&0
\end{bmatrix}=\begin{bmatrix}
 A &B_{1} \\
   C_{2}&0
\end{bmatrix}-\begin{bmatrix}B_{3}\\0\end{bmatrix}X_{3}\begin{bmatrix}C_{3}&0\end{bmatrix}
-\begin{bmatrix}B_{4}\\0\end{bmatrix}X_{4}\begin{bmatrix}C_{4}&0\end{bmatrix}.
\end{align}
It follows from Theorem \ref{theorem02} that there exist $\widehat{X_{3}}$, $\widehat{X_{4}}$
$\widetilde{X_{3}}$ and $\widetilde{X_{4}}$ such that the real quaternion matrix   (\ref{equ034}) attains its maximal and minimal ranks, i.e.
\begin{align*}
&r\begin{bmatrix}
 A-B_{3}\widehat{X_{3}}C_{3}-B_{4}\widehat{X_{4}}C_{4} &B_{1} \\
   C_{2}&0
\end{bmatrix}=\mathop {\max }\limits_{\left\{ {{X_3},X_{4}} \right\}} r\begin{bmatrix}
 A-B_{3}X_{3}C_{3}-B_{4}X_{4}C_{4} &B_{1} \\
   C_{2}&0
\end{bmatrix}\\&=\min \Bigg\{ r\left[ {\begin{array}{*{20}{c}}
   A & {{B_1}}  \\
   {{C_2}} & 0  \\
   {{C_3}} & 0  \\
   {{C_4}} & 0  \\
\end{array}} \right],r\left[ {\begin{array}{*{20}{c}}
   A & {{B_1}} & {{B_3}} & {{B_4}}  \\
   {{C_2}} & 0 & 0 & 0  \\
\end{array}} \right], r\left[ {\begin{array}{*{20}{c}}
   A & {{B_1}} & {{B_3}}  \\
   {{C_2}} & 0 & 0  \\
   {{C_4}} & 0 & 0  \\
\end{array}} \right],r\left[ {\begin{array}{*{20}{c}}
   A & {{B_1}} & {{B_4}}  \\
   {{C_2}} & 0 & 0  \\
   {{C_3}} & 0 & 0  \\
\end{array}} \right] \Bigg\},
\end{align*}
\begin{align*}
&r\begin{bmatrix}
 A-B_{3}\widetilde{X_{3}}C_{3}-B_{4}\widetilde{X_{4}}C_{4} &B_{1} \\
   C_{2}&0
\end{bmatrix}=\mathop {\min }\limits_{\left\{ {{X_3},X_{4}} \right\}} r\begin{bmatrix}
 A-B_{3}X_{3}C_{3}-B_{4}X_{4}C_{4} &B_{1} \\
   C_{2}&0
\end{bmatrix}\\&=r\left[ {\begin{array}{*{20}{c}}
   A & {{B_1}}  \\
   {{C_2}} & 0  \\
   {{C_3}} & 0  \\
   {{C_4}} & 0  \\
\end{array}} \right] + r\left[ {\begin{array}{*{20}{c}}
   A & {{B_1}} & {{B_3}} & {{B_4}}  \\
   {{C_2}} & 0 & 0 & 0  \\
\end{array}} \right]
\end{align*}
\begin{align*}
  + \max \left\{ r\left[ {\begin{array}{*{20}{c}}
   A & {{B_1}} & {{B_3}}  \\
   {{C_2}} & 0 & 0  \\
   {{C_4}} & 0 & 0  \\
\end{array}} \right] - r\left[ {\begin{array}{*{20}{c}}
   A & {{B_1}} & {{B_3}} & {{B_4}}  \\
   {{C_2}} & 0 & 0 & 0  \\
   {{C_4}} & 0 & 0 & 0  \\
\end{array}} \right] - r\left[ {\begin{array}{*{20}{c}}
   A & {{B_1}} & {{B_3}}  \\
   {{C_2}} & 0 & 0  \\
   {{C_3}} & 0 & 0  \\
   {{C_4}} & 0 & 0  \\
\end{array}} \right], \right.
\end{align*}
\begin{align*}
 \left. r\left[ {\begin{array}{*{20}{c}}
   A & {{B_1}} & {{B_4}}  \\
   {{C_2}} & 0 & 0  \\
   {{C_3}} & 0 & 0  \\
\end{array}} \right] - r\left[ {\begin{array}{*{20}{c}}
   A & {{B_1}} & {{B_3}} & {{B_4}}  \\
   {{C_2}} & 0 & 0 & 0  \\
   {{C_3}} & 0 & 0 & 0  \\
\end{array}} \right] - r\left[ {\begin{array}{*{20}{c}}
   A & {{B_1}} & {{B_4}}  \\
   {{C_2}} & 0 & 0  \\
   {{C_3}} & 0 & 0  \\
   {{C_4}} & 0 & 0  \\
\end{array}} \right]\right\} .
\end{align*}
By  Corollary \ref{coro01}, we can find $\widehat{X_{1}}$,
$\widehat{X_{2}}$, $\widetilde{X_{1}}$ and
$\widetilde{X_{2}}$ such that
\begin{align*}
&r\left[ {f_{1}\left( {\widehat{X_{1}},\widehat{X_{2}},\widehat{X_{3}},\widehat{X_{4}}} \right)} \right]=
\min \left\{ m,n,
 r\left[ {\begin{array}{*{20}{c}}
   A-B_{3}\widehat{X_{3}}C_{3}-B_{4}\widehat{X_{4}}C_{4} & B_{1}  \\
   C_{2} & 0
\end{array}} \right]  \right\}\\&=
\min \left\{ m,n,r\left[ {\begin{array}{*{20}{c}}
   A & {{B_1}}  \\
   {{C_2}} & 0  \\
   {{C_3}} & 0  \\
   {{C_4}} & 0  \\
\end{array}} \right],r\left[ {\begin{array}{*{20}{c}}
   A & {{B_1}} & {{B_3}} & {{B_4}}  \\
   {{C_2}} & 0 & 0 & 0  \\
\end{array}} \right],
 r\left[ {\begin{array}{*{20}{c}}
   A & {{B_1}} & {{B_3}}  \\
   {{C_2}} & 0 & 0  \\
   {{C_4}} & 0 & 0  \\
\end{array}} \right],r\left[ {\begin{array}{*{20}{c}}
   A & {{B_1}} & {{B_4}}  \\
   {{C_2}} & 0 & 0  \\
   {{C_3}} & 0 & 0  \\
\end{array}} \right] \right\},
\end{align*}
\begin{align*}
&r\left[ {f_{1}\left( {\widetilde{X_{1}},\widetilde{X_{2}},\widetilde{X_{3}},\widetilde{X_{4}}} \right)} \right]=
r\left[ {\begin{array}{*{20}{c}}
   A-B_{3}\widetilde{X_{3}}C_{3}-B_{4}\widetilde{X_{4}}C_{4} &B_{1} \\
   C_{2}&0
\end{array}} \right] -r(B_{1})-r(C_{2})\\&= r\left[ {\begin{array}{*{20}{c}}
   A & {{B_1}}  \\
   {{C_2}} & 0  \\
   {{C_3}} & 0  \\
   {{C_4}} & 0  \\
\end{array}} \right] + r\left[ {\begin{array}{*{20}{c}}
   A & {{B_1}} & {{B_3}} & {{B_4}}  \\
   {{C_2}} & 0 & 0 & 0  \\
\end{array}} \right] - r({B_1}) - r({C_2})
\end{align*}
\begin{align*}
  + \max \left\{ r\left[ {\begin{array}{*{20}{c}}
   A & {{B_1}} & {{B_3}}  \\
   {{C_2}} & 0 & 0  \\
   {{C_4}} & 0 & 0  \\
\end{array}} \right] - r\left[ {\begin{array}{*{20}{c}}
   A & {{B_1}} & {{B_3}} & {{B_4}}  \\
   {{C_2}} & 0 & 0 & 0  \\
   {{C_4}} & 0 & 0 & 0  \\
\end{array}} \right] - r\left[ {\begin{array}{*{20}{c}}
   A & {{B_1}} & {{B_3}}  \\
   {{C_2}} & 0 & 0  \\
   {{C_3}} & 0 & 0  \\
   {{C_4}} & 0 & 0  \\
\end{array}} \right], \right.
\end{align*}
\begin{align*}
 \left. r\left[ {\begin{array}{*{20}{c}}
   A & {{B_1}} & {{B_4}}  \\
   {{C_2}} & 0 & 0  \\
   {{C_3}} & 0 & 0  \\
\end{array}} \right] - r\left[ {\begin{array}{*{20}{c}}
   A & {{B_1}} & {{B_3}} & {{B_4}}  \\
   {{C_2}} & 0 & 0 & 0  \\
   {{C_3}} & 0 & 0 & 0  \\
\end{array}} \right] - r\left[ {\begin{array}{*{20}{c}}
   A & {{B_1}} & {{B_4}}  \\
   {{C_2}} & 0 & 0  \\
   {{C_3}} & 0 & 0  \\
   {{C_4}} & 0 & 0  \\
\end{array}} \right]\right\} .
\end{align*}

\end{proof}

\begin{theorem}
Let
\begin{align}\label{fun13}
f_{3}(X_{1},X_{2},X_{3},X_{4})=A-B_{1}X_{1}C_{1}-B_{2}X_{2}C_{2}-B_{3}X_{3}C_{3}-B_{4}X_{4}C_{4}
\end{align}
be a linear matrix expression with four two-sided terms over $\mathbb{H}$, where
\begin{align*}
\mathcal{R}(B_{i})\subseteq \mathcal{R}(B_{2}),~\mathcal{R}(C_{j}^{*})\subseteq \mathcal{R}(C_{1}^{*}),~i=1,3,4,~j=2,3,4.
\end{align*}
Then,
\begin{align*}
 \mathop {\max }\limits_{\left\{ {{X_i}} \right\}} r\left[ {f_{3}\left( {{X_1},{X_2},{X_3},{X_4}} \right)} \right]
 =& \min \Bigg\{ r\begin{bmatrix}A&B_{2}\end{bmatrix},r\begin{bmatrix}A\\C_{1}\end{bmatrix},r\left[ {\begin{array}{*{20}{c}}
   A & {{B_1}}  \\
   {{C_2}} & 0  \\
   {{C_3}} & 0  \\
   {{C_4}} & 0  \\
\end{array}} \right],r\left[ {\begin{array}{*{20}{c}}
   A & {{B_1}} & {{B_3}} & {{B_4}}  \\
   {{C_2}} & 0 & 0 & 0  \\
\end{array}} \right], \nonumber\\
 &r\left[ {\begin{array}{*{20}{c}}
   A & {{B_1}} & {{B_3}}  \\
   {{C_2}} & 0 & 0  \\
   {{C_4}} & 0 & 0  \\
\end{array}} \right],r\left[ {\begin{array}{*{20}{c}}
   A & {{B_1}} & {{B_4}}  \\
   {{C_2}} & 0 & 0  \\
   {{C_3}} & 0 & 0  \\
\end{array}} \right] \Bigg\} ,
\end{align*}
\begin{align*}
 &\mathop {\min }\limits_{\left\{ {{X_i}} \right\}} r\left[ {f_{3}\left( {{X_1},{X_2},{X_3},{X_4}} \right)} \right]\\
 &= r\left[ {\begin{array}{*{20}{c}}
   A & {{B_1}}  \\
   {{C_2}} & 0  \\
   {{C_3}} & 0  \\
   {{C_4}} & 0  \\
\end{array}} \right] + r\left[ {\begin{array}{*{20}{c}}
   A & {{B_1}} & {{B_3}} & {{B_4}}  \\
   {{C_2}} & 0 & 0 & 0  \\
\end{array}} \right]+r\begin{bmatrix}A\\C_{1}\end{bmatrix}+r\begin{bmatrix}A&B_{2}\end{bmatrix}- r\begin{bmatrix}A&B_{1}\\C_{1}&0\end{bmatrix}-r\begin{bmatrix}A&B_{2}\\C_{2}&0\end{bmatrix}
\end{align*}
\begin{align*}
+\max \left\{ r\left[ {\begin{array}{*{20}{c}}
   A & {{B_1}} & {{B_3}}  \\
   {{C_2}} & 0 & 0  \\
   {{C_4}} & 0 & 0  \\
\end{array}} \right] - r\left[ {\begin{array}{*{20}{c}}
   A & {{B_1}} & {{B_3}} & {{B_4}}  \\
   {{C_2}} & 0 & 0 & 0  \\
   {{C_4}} & 0 & 0 & 0  \\
\end{array}} \right] - r\left[ {\begin{array}{*{20}{c}}
   A & {{B_1}} & {{B_3}}  \\
   {{C_2}} & 0 & 0  \\
   {{C_3}} & 0 & 0  \\
   {{C_4}} & 0 & 0  \\
\end{array}} \right], \right.
\end{align*}
\begin{align*}
 \left. r\left[ {\begin{array}{*{20}{c}}
   A & {{B_1}} & {{B_4}}  \\
   {{C_2}} & 0 & 0  \\
   {{C_3}} & 0 & 0  \\
\end{array}} \right] - r\left[ {\begin{array}{*{20}{c}}
   A & {{B_1}} & {{B_3}} & {{B_4}}  \\
   {{C_2}} & 0 & 0 & 0  \\
   {{C_3}} & 0 & 0 & 0  \\
\end{array}} \right] - r\left[ {\begin{array}{*{20}{c}}
   A & {{B_1}} & {{B_4}}  \\
   {{C_2}} & 0 & 0  \\
   {{C_3}} & 0 & 0  \\
   {{C_4}} & 0 & 0  \\
\end{array}} \right]\right\} .
\end{align*}
\end{theorem}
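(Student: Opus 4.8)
The plan is to reduce this four-term problem to two facts that are already available: Corollary~\ref{coro02} (two two-sided terms, under a domination hypothesis) and the $X_3,X_4$-optimisation carried out inside the proof of Theorem~\ref{theorem04}. I would optimise in two stages: first over $X_1,X_2$ with $X_3,X_4$ held fixed, then over $X_3,X_4$. The hypotheses $\mathcal R(B_i)\subseteq\mathcal R(B_2)$, $\mathcal R(C_j^{*})\subseteq\mathcal R(C_1^{*})$ enter twice: the cases $i=1$, $j=2$ make Corollary~\ref{coro02} applicable to the inner problem, and the cases $i=3,4$, $j=3,4$ make all but one of the ranks produced by Corollary~\ref{coro02} independent of $X_3,X_4$.

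\emph{Stage 1 (inner optimisation) and Stage 2 (which terms are constant).} Fix $X_3,X_4$ and set $A'=A-B_3X_3C_3-B_4X_4C_4$, so $f_3=A'-B_1X_1C_1-B_2X_2C_2$. Since $\mathcal R(B_1)\subseteq\mathcal R(B_2)$ and $\mathcal R(C_2^{*})\subseteq\mathcal R(C_1^{*})$, I would apply Corollary~\ref{coro02} with $(A,B,C,D,E)\mapsto(A',B_1,B_2,C_1,C_2)$, obtaining
\begin{align*}
\max_{X_1,X_2}r[f_{3}]=\min\Big\{\,r\begin{bmatrix}A'\\C_1\end{bmatrix},\ r\begin{bmatrix}A'&B_2\end{bmatrix},\ \rho\,\Big\},
\end{align*}
\begin{align*}
\min_{X_1,X_2}r[f_{3}]=r\begin{bmatrix}A'\\C_1\end{bmatrix}+r\begin{bmatrix}A'&B_2\end{bmatrix}+\rho-r\begin{bmatrix}A'&B_2\\C_2&0\end{bmatrix}-r\begin{bmatrix}A'&B_1\\C_1&0\end{bmatrix},
\end{align*}
where $\rho=\rho(X_3,X_4):=r\begin{bmatrix}A'&B_1\\C_2&0\end{bmatrix}$. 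Next I would observe that the four ``structural'' ranks above are unaffected by $X_3,X_4$: using $\mathcal R(B_3),\mathcal R(B_4)\subseteq\mathcal R(B_2)$ one writes $B_3X_3C_3+B_4X_4C_4=B_2K$, and using $\mathcal R(C_3^{*}),\mathcal R(C_4^{*})\subseteq\mathcal R(C_1^{*})$ one writes the same sum as $LC_1$; then elementary block column (resp.\ row) operations give $r\begin{bmatrix}A'&B_2\end{bmatrix}=r\begin{bmatrix}A&B_2\end{bmatrix}$, $r\begin{bmatrix}A'\\C_1\end{bmatrix}=r\begin{bmatrix}A\\C_1\end{bmatrix}$, $r\begin{bmatrix}A'&B_2\\C_2&0\end{bmatrix}=r\begin{bmatrix}A&B_2\\C_2&0\end{bmatrix}$ and $r\begin{bmatrix}A'&B_1\\C_1&0\end{bmatrix}=r\begin{bmatrix}A&B_1\\C_1&0\end{bmatrix}$. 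So only $\rho(X_3,X_4)$ still depends on $X_3,X_4$.

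\emph{Stage 3 and conclusion.} Since
\begin{align*}
\begin{bmatrix}A'&B_1\\C_2&0\end{bmatrix}=\begin{bmatrix}A&B_1\\C_2&0\end{bmatrix}-\begin{bmatrix}B_3\\0\end{bmatrix}X_3\begin{bmatrix}C_3&0\end{bmatrix}-\begin{bmatrix}B_4\\0\end{bmatrix}X_4\begin{bmatrix}C_4&0\end{bmatrix},
\end{align*}
which is exactly the matrix in (\ref{equ034}), the extreme values of $\rho$ over $X_3,X_4$ are precisely those computed, via Theorem~\ref{theorem02}, in the proof of Theorem~\ref{theorem04}: $\max_{X_3,X_4}\rho$ is the minimum of
\begin{align*}
r\begin{bmatrix}A&B_1\\C_2&0\\C_3&0\\C_4&0\end{bmatrix},\quad r\begin{bmatrix}A&B_1&B_3&B_4\\C_2&0&0&0\end{bmatrix},\quad r\begin{bmatrix}A&B_1&B_3\\C_2&0&0\\C_4&0&0\end{bmatrix},\quad r\begin{bmatrix}A&B_1&B_4\\C_2&0&0\\C_3&0&0\end{bmatrix},
\end{align*}
and $\min_{X_3,X_4}\rho$ is the ``sum of the first two of these plus the maximum of two rank differences'' expression displayed there. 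For the maximum of $f_3$: the first two entries of the Stage~1 minimum are constants and $t\mapsto\min\{c_1,c_2,t\}$ is nondecreasing, hence $\max_{X_1,\dots,X_4}r[f_{3}]=\min\big\{\,r\begin{bmatrix}A\\C_1\end{bmatrix},\,r\begin{bmatrix}A&B_2\end{bmatrix},\,\max_{X_3,X_4}\rho\,\big\}$, and inserting the Stage~3 value gives the asserted maximum. For the minimum: by Stage~2 the four non-$\rho$ terms are constants, so $\min_{X_1,\dots,X_4}r[f_{3}]$ equals that constant plus $\min_{X_3,X_4}\rho$, and inserting the Stage~3 value gives the asserted minimum.

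\emph{Expected obstacle.} The argument is light on technique once the two-stage splitting is set up; the care goes into (i) the bookkeeping of Stage~2, i.e.\ verifying each of the four rank identities by a correct sequence of block operations that uses precisely the inclusions $\mathcal R(B_3),\mathcal R(B_4)\subseteq\mathcal R(B_2)$ and $\mathcal R(C_3^{*}),\mathcal R(C_4^{*})\subseteq\mathcal R(C_1^{*})$, and (ii) checking that the nested optimisation is legitimate — this requires that the inner optima of Corollary~\ref{coro02} are attained and that $\max_{X_3,X_4}\rho$ is attained, both of which are available (the latter from the explicit $\widehat{X_3},\widehat{X_4}$ in the proof of Theorem~\ref{theorem04}) — and finally matching the six block ranks in the final formulas against those coming out of Corollary~\ref{coro02} and the $\rho$-computation.
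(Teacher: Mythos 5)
Your proposal is correct and follows essentially the route the paper intends: its one-line proof ("apply the method of Theorem \ref{theorem04} and Corollary \ref{coro02}") is exactly your two-stage scheme of optimizing $X_1,X_2$ via Corollary \ref{coro02} and then $X_3,X_4$ via the bordered-matrix computation from Theorem \ref{theorem04}. Your Stage~2 observation — that the inclusions $\mathcal{R}(B_3),\mathcal{R}(B_4)\subseteq\mathcal{R}(B_2)$ and $\mathcal{R}(C_3^{*}),\mathcal{R}(C_4^{*})\subseteq\mathcal{R}(C_1^{*})$ let one write $B_3X_3C_3+B_4X_4C_4=B_2K=LC_1$, so that all ranks except $r\bigl[\begin{smallmatrix}A'&B_1\\C_2&0\end{smallmatrix}\bigr]$ are constant — is precisely the detail the paper leaves implicit, and you verify it correctly.
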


\begin{proof}
Applying the method mentioned in Theorem \ref{theorem04} and Corollary \ref{coro02}, we can get the results.
\end{proof}

\begin{remark}
Tian \cite{tianjiaocai} derived the maximal and minimal ranks of (\ref{fun02}), (\ref{fun03}), (\ref{fun04}) and (\ref{fun13}) over $\mathbb{C}$.
\end{remark}
\section{\textbf{Minimal rank of the general solution to real quaternion matrix equation (\ref{sys01})}}

We in this section consider a solvability condition and the general solution to  real quaternion matrix equation (\ref{sys01}). Moreover,
we study the minimal rank of  the general solution to real quaternion matrix equation (\ref{sys01}).

\begin{theorem}\label{theorem03}
Let $A,B,C,D,E$ be given. Then the real matrix equation (\ref{sys01}) is consistent if and only if
\begin{align}\label{equ037}
r\begin{bmatrix}A&C&B\end{bmatrix}=r\begin{bmatrix}C&B\end{bmatrix},
r\begin{bmatrix}A\\D\\E\end{bmatrix}=r\begin{bmatrix}D\\E\end{bmatrix},
r\begin{bmatrix}A&B\\E&0\end{bmatrix}=r\begin{bmatrix}0&B\\E&0\end{bmatrix},
r\begin{bmatrix}A&C\\D&0\end{bmatrix}=r\begin{bmatrix}0&C\\D&0\end{bmatrix}.
\end{align}

In this case, the general solution to (\ref{sys01}) can be expressed as
\begin{align*}
X=T_{1}^{-1}\begin{bmatrix}A_{1}&A_{2}&X_{3}\\A_{4}&X_{5}&X_{6}\\X_{7}&X_{8}&X_{9}\end{bmatrix}V_{1}^{-1},
Y=T_{2}^{-1}\begin{bmatrix}Y_{1}&Y_{2}&Y_{3}\\Y_{4}&A_{5}-X_{5}&A_{6}\\Y_{7}&A_{8}&A_{9}\end{bmatrix}V_{2}^{-1},
\end{align*}
where $A_{1},A_{2},A_{4},A_{5},A_{6},A_{8},A_{9},T_{1},V_{1},T_{2},V_{2}$ are defined as in Theorem \ref{theorem01}, and $X_{3},X_{5},X_{6},X_{7},$ $X_{8},X_{9},
Y_{1},Y_{2},Y_{3},Y_{4},Y_{7}$ are arbitrary real quaternion matrices.
\end{theorem}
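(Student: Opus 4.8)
The plan is to leverage the simultaneous decomposition from Theorem~\ref{theorem01} to transform the matrix equation (\ref{sys01}) into a trivially solvable canonical form, exactly as the rank analysis in the proof of Theorem~\ref{theorem02} did for $p(X,Y)$. First I would substitute the factorizations (\ref{equ021}) into $BXD+CYE=A$, obtaining $P(S_BT_1XV_1S_D + S_CT_2YV_2S_E)Q = PS_AQ$; cancelling the nonsingular $P$ and $Q$ and writing $\widehat X = T_1XV_1$, $\widehat Y = T_2YV_2$ reduces the equation to $S_B\widehat X S_D + S_C\widehat Y S_E = S_A$. Partitioning $\widehat X$ and $\widehat Y$ into $3\times 3$ blocks as in the proof of Theorem~\ref{theorem02}, the products $S_B\widehat X S_D$ and $S_C\widehat Y S_E$ have the explicit sparse forms displayed there (with $*$-entries that are automatically matched by the zero blocks of $S_A$ in rows/columns corresponding to $I_{m_1},I_{m_5},I_{m_6}$ and the zero trailing block). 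Comparing entries block by block against $S_A$ then shows: the equation is consistent over the variable blocks if and only if the ``rigid'' blocks of $S_A$ — namely $A_3, A_7$, and the blocks forced to vanish (i.e. $A_1,A_2,A_4,A_5,A_6,A_8,A_9$ appearing against positions where $S_B\widehat XS_D$ or $S_C\widehat YS_E$ contribute nothing) — satisfy the required identities, and otherwise the blocks $X_1,X_2,X_4$ are forced to equal $A_1,A_2,A_4$, the block pair $(X_5,Y_5)$ must satisfy $X_5+Y_5=A_5$, and $Y_6=A_6$, $Y_8=A_8$, $Y_9=A_9$, while $X_3,X_6,X_7,X_8,X_9,Y_1,Y_2,Y_3,Y_4,Y_7$ remain entirely free.

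Next I would translate the internal consistency condition (that $A_3$ and $A_7$ must be zero, forcing $m_1'=m_6'=\cdots$, i.e.\ that the ``$A$-core'' blocks in the canonical form vanish) back into rank equalities for the original data. Here the key is that, by Theorem~\ref{theorem01}, every block size $m_i,n_j$ and also $r(A_3),r(A_7)$ are expressible via ranks of the various bordered matrices built from $A,B,C,D,E$; in particular, from the identities (\ref{equ0033})–(\ref{equ0034}) and (\ref{equ022})–(\ref{equ026}) one reads off that $r(A_3)=0$ is equivalent to $r\begin{bmatrix}A&C\\D&0\end{bmatrix}$ collapsing to the value it would have if $A$ were in $\mathcal R(C)$ along columns and $\mathcal R(D^*)$ along rows — i.e.\ to the equality $r\begin{bmatrix}A&C\\D&0\end{bmatrix}=r\begin{bmatrix}0&C\\D&0\end{bmatrix}$ — and similarly $r(A_7)=0$ corresponds to $r\begin{bmatrix}A&B\\E&0\end{bmatrix}=r\begin{bmatrix}0&B\\E&0\end{bmatrix}$. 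The vanishing of the remaining forced blocks $A_1,A_2,A_4,A_5,A_6,A_8,A_9$ together packages (by a rank count in the canonical form, using that $r(S_A)$ equals $m_1+m_5+m_6$ plus the rank of the $3\times3$ block matrix) into the two ``global'' conditions $r\begin{bmatrix}A&C&B\end{bmatrix}=r\begin{bmatrix}C&B\end{bmatrix}$ and $r\begin{bmatrix}A\\D\\E\end{bmatrix}=r\begin{bmatrix}D\\E\end{bmatrix}$, which say precisely that the column space of $A$ lies in $\mathcal R(\begin{bmatrix}B&C\end{bmatrix})$ and the row space of $A$ lies in the row space of $\begin{bmatrix}D\\E\end{bmatrix}$. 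Assembling these four equalities gives exactly (\ref{equ037}).

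Finally, assuming (\ref{equ037}), I would assemble the general solution: the block equations fix $X_1=A_1$, $X_2=A_2$, $X_4=A_4$ in $\widehat X$ and $Y_6=A_6$, $Y_8=A_8$, $Y_9=A_9$ in $\widehat Y$, with the coupling $X_5+Y_5=A_5$ — which I parametrise by letting $X_5$ be free and setting $Y_5=A_5-X_5$ — and the entries $X_3,X_6,X_7,X_8,X_9$ and $Y_1,Y_2,Y_3,Y_4,Y_7$ arbitrary; then $X=T_1^{-1}\widehat X V_1^{-1}$ and $Y=T_2^{-1}\widehat Y V_2^{-1}$ yields the stated formulas. Conversely every such pair visibly solves $S_B\widehat XS_D+S_C\widehat YS_E=S_A$, hence (\ref{sys01}). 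I expect the main obstacle to be the bookkeeping in the second paragraph: carefully verifying that the $*$-blocks in $S_B\widehat XS_D$ and $S_C\widehat YS_E$ really do fall in positions where $S_A$ has matching free entries (so they impose no constraint), and then reliably converting the block-vanishing conditions and size identities from Theorem~\ref{theorem01} into precisely the four rank equalities (\ref{equ037}) — no more and no less — without double counting. The rest is routine.
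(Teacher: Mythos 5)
Your reduction to $S_B\widehat X S_D+S_C\widehat Y S_E=S_A$ and your parametrization of the solutions (third paragraph) follow exactly the route the paper intends --- the paper in fact states Theorem \ref{theorem03} without a written proof, as a direct consequence of Theorem \ref{theorem01} --- but the middle step, where you characterize consistency in the canonical form and convert it into (\ref{equ037}), contains genuine errors. In the canonical form the obstruction to solvability is not only $A_3=0$ and $A_7=0$: the blocks $I_{m_1}$, $I_{m_5}$, $I_{m_6}$ of $S_A$ sit in block rows/columns that $S_B\widehat XS_D$ and $S_C\widehat YS_E$ cannot reach at all, so consistency forces $m_1=m_5=m_6=0$; moreover the $*$-blocks of the two products (block row 1 and block column 1) are not ``automatically matched by the zero blocks of $S_A$'' --- they must themselves vanish, and those constraints only become vacuous once $m_1=m_6=0$. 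Since your write-up never imposes $m_1=m_5=m_6=0$, the ``if'' direction of the theorem does not follow as stated.

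Second, your dictionary between block conditions and the four rank equalities is wrong. The blocks $A_1,A_2,A_4,A_5,A_6,A_8,A_9$ are not ``forced to vanish'' (they are absorbed by $X_1,X_2,X_4,X_5+Y_5,Y_6,Y_8,Y_9$, as you yourself note two lines later), so the first two equalities of (\ref{equ037}) cannot ``package'' their vanishing. Computing in the canonical form, $r\begin{bmatrix}A&B&C\end{bmatrix}=m_1+m_2+m_3+m_4+m_5+m_6$ while $r\begin{bmatrix}B&C\end{bmatrix}=m_2+m_3+m_4+r\begin{bmatrix}B_1&C_1&C_2\end{bmatrix}$, so the first equality says $m_1+m_5+m_6=r\begin{bmatrix}B_1&C_1&C_2\end{bmatrix}\le m_1$, i.e.\ it forces $m_5=m_6=0$ (plus full row rank of $\begin{bmatrix}B_1&C_1&C_2\end{bmatrix}$); dually the second equality forces $m_1=m_5=0$. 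Only the last two equalities correspond, via (\ref{equ0033})--(\ref{equ0034}) and the canonical values of $r(B)+r(E)$ and $r(C)+r(D)$, to $A_7=0$ and $A_3=0$, and even these come jointly with $m_5=0$ and rank conditions on $B_1,E_1,C_1,D_1$ rather than as the clean one-to-one correspondence you assert. Conversely, when $m_1=m_5=m_6=0$ and $A_3=A_7=0$, the blocks $B_1,C_1,C_2,D_1,E_1,E_2$ are empty and all four equalities hold trivially. This corrected bookkeeping is precisely the ``main obstacle'' you anticipated; as proposed, your translation would yield a consistency criterion that is simultaneously too strong (vanishing of all the $A_i$) and too weak (nothing excludes $I_{m_1},I_{m_5},I_{m_6}$).
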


\begin{remark}
J.K. Baksalary and R. Kala \cite{J.K.B}, A.B. \"{O}zg\"{u}ler \cite{A.B} derived the solvability condition  (\ref{equ037}).
\end{remark}

\begin{theorem}
Let $A\in \mathbb{H}^{m\times n},
B\in \mathbb{H}^{m\times p_{1}},C\in \mathbb{H}^{m\times p_{2}},D\in \mathbb{H}^{q_{1}\times n}$ and $E\in \mathbb{H}^{q_{2}\times n}$  be given. Suppose that real quaternion matrix equation
(\ref{sys01}) is consistent.
Then\newline%
\begin{align*}
  &\mathop {\min }\limits_{BXD+CYE=A } r\left(  {X }
\right) =r\begin{bmatrix}A&C\end{bmatrix}+r\begin{bmatrix}A\\E\end{bmatrix}-r\begin{bmatrix}A&C\\E&0\end{bmatrix},\\&
  \mathop {\min }\limits_{BXD+CYE=A } r\left(  {Y }
\right) =r\begin{bmatrix}A&B\end{bmatrix}+r\begin{bmatrix}A\\D\end{bmatrix}-r\begin{bmatrix}A&B\\D&0\end{bmatrix}.
\end{align*}

\end{theorem}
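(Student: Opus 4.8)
The plan is to reduce everything to the canonical form supplied by Theorem~\ref{theorem01} and then read off the minimal ranks from the structure of the blocks, exactly as in the proof of Theorem~\ref{theorem02}. By Theorem~\ref{theorem03}, since (\ref{sys01}) is consistent, the general solution has the form
\begin{align*}
X=T_{1}^{-1}\begin{bmatrix}A_{1}&A_{2}&X_{3}\\A_{4}&X_{5}&X_{6}\\X_{7}&X_{8}&X_{9}\end{bmatrix}V_{1}^{-1},\quad
Y=T_{2}^{-1}\begin{bmatrix}Y_{1}&Y_{2}&Y_{3}\\Y_{4}&A_{5}-X_{5}&A_{6}\\Y_{7}&A_{8}&A_{9}\end{bmatrix}V_{2}^{-1},
\end{align*}
with the indicated blocks free. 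Since $T_{1},V_{1},T_{2},V_{2}$ are nonsingular, $r(X)$ equals the rank of the middle $3\times 3$ block array for $X$ and likewise for $Y$. So the problem becomes: minimize the rank of a partially prescribed block matrix where some entries are fixed ($A_{1},A_{2},A_{4}$ in the $X$-array; $A_{6},A_{8},A_{9}$ in the $Y$-array) and the rest are arbitrary — noting that the coupling variable $X_{5}$ appears in both arrays but can be absorbed into $Y_{2}$-type freedom, so the two minimizations decouple.

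First I would treat $r(X)$. The array is $\begin{bmatrix}A_{1}&A_{2}&X_{3}\\A_{4}&X_{5}&X_{6}\\X_{7}&X_{8}&X_{9}\end{bmatrix}$ with only $A_1,A_2,A_4$ fixed. The standard fact (cf. Corollary~\ref{coro01}-type reasoning, or direct elementary-operation bordering) is that the minimal rank of a block matrix in which an entire block row and an entire block column outside a fixed submatrix $\begin{bmatrix}A_1&A_2\\A_4&\ast\end{bmatrix}$ are free equals $r\begin{bmatrix}A_1&A_2\end{bmatrix}+r\begin{bmatrix}A_1\\A_4\end{bmatrix}-r(A_1)$; here the bottom row $(X_7,X_8,X_9)$ and the right column $(X_3,X_6,X_9)$ and the entry $X_5$ are all free, so by choosing them to annihilate as much as possible one gets
\begin{align*}
\min r(X)=r\begin{bmatrix}A_{1}&A_{2}\end{bmatrix}+r\begin{bmatrix}A_{1}\\A_{4}\end{bmatrix}-r(A_{1}).
\end{align*}
Then I would translate these three canonical-form ranks back into ranks of the original data using the block identities established in Theorem~\ref{theorem01} (the same bookkeeping that produced (\ref{equ0033})–(\ref{equ0034}) and (\ref{equ022})–(\ref{equ026})): one identifies $r(A_1)$, $r\begin{bmatrix}A_1&A_2\end{bmatrix}$, $r\begin{bmatrix}A_1\\A_4\end{bmatrix}$ with rank differences of bordered matrices built from $A,B,C,D,E$, and after cancellation obtains $r\begin{bmatrix}A&C\end{bmatrix}+r\begin{bmatrix}A\\E\end{bmatrix}-r\begin{bmatrix}A&C\\E&0\end{bmatrix}$. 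The statement for $Y$ is entirely symmetric: in the $Y$-array the fixed submatrix is $\begin{bmatrix}\ast&A_6\\A_8&A_9\end{bmatrix}$ in the lower-right corner with the first block row and first block column free, giving $\min r(Y)=r\begin{bmatrix}A_6\\A_9\end{bmatrix}+r\begin{bmatrix}A_8&A_9\end{bmatrix}-r(A_9)$, which converts to $r\begin{bmatrix}A&B\end{bmatrix}+r\begin{bmatrix}A\\D\end{bmatrix}-r\begin{bmatrix}A&B\\D&0\end{bmatrix}$.

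The main obstacle I anticipate is the bookkeeping in the last step: one must verify that the dimensions $m_i,n_i$ of Theorem~\ref{theorem01} combine so that the canonical-form expression collapses exactly to the stated formula, and in particular confirm that the $X_5$ coupling genuinely does not obstruct either minimization (it is free in the $X$-array and its negative is free in the $Y$-array through $Y_5=A_5-X_5$, but $Y_5$ itself does not appear among the fixed entries of the $Y$-array after the relabeling, so the two problems are independent). Checking consistency — that the minimizing choices for $X$ and $Y$ can be made simultaneously while still solving $BXD+CYE=A$ — is the one place where a little care is needed, but it follows because in the canonical picture the equation only constrains the fixed blocks $A_1,\dots,A_9$ (cf. the structure of $S_B\widehat X S_D+S_C\widehat Y S_E$ in the proof of Theorem~\ref{theorem02}), leaving all the free blocks genuinely free to be chosen for rank minimization.
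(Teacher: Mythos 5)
Your proposal is correct and follows essentially the same route as the paper: parametrize the solutions via Theorem~\ref{theorem03}, reduce $\min r(X)$ (and symmetrically $\min r(Y)$) to the minimal rank of the canonical block array with $A_{1},A_{2},A_{4}$ (resp. $A_{6},A_{8},A_{9}$) fixed, obtain $r\begin{bmatrix}A_{1}&A_{2}\end{bmatrix}+r\begin{bmatrix}A_{1}\\A_{4}\end{bmatrix}-r(A_{1})$, and translate back through the rank identities of Theorem~\ref{theorem01}. The only cosmetic difference is that you invoke the classical L-shaped minimal-rank-completion formula directly where the paper applies Theorem~\ref{theorem02} to the splitting (\ref{equ041}); the resulting computation and conclusion are identical, and your observation that the $X_{5}$ coupling leaves all free blocks unconstrained matches the paper's use of the general-solution parametrization.
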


\begin{proof}
It follows from Theorem \ref{theorem03} that the solution $X$ in (\ref{sys01}) can be expressed as
\begin{align*}
X=T_{1}^{-1}\begin{bmatrix}A_{1}&A_{2}&X_{3}\\A_{4}&X_{5}&X_{6}\\X_{7}&X_{8}&X_{9}\end{bmatrix}V_{1}^{-1},
\end{align*}
where $A_{1},A_{2},A_{4},T_{1},V_{1}$ are defined as in Theorem \ref{theorem01}, and $X_{3},X_{5},X_{6},X_{7},X_{8},X_{9}$ are arbitrary real quaternion matrices. Since
real quaternion matrix equation
(\ref{sys01}) is consistent, i.e., the equations in (\ref{equ037}) hold, we get
$A_{3}=0,A_{7}=0,B_{1}=0,D_{1}=0,$ where $A_{3},A_{7},B_{1}$ and $D_{1}$ are defined as in Theorem \ref{theorem01}. Note that
\begin{align}\label{equ041}
\begin{bmatrix}A_{1}&A_{2}&X_{3}\\A_{4}&X_{5}&X_{6}\\X_{7}&X_{8}&X_{9}\end{bmatrix}=
\begin{bmatrix}A_{1}&A_{2}&X_{3}\\A_{4}&0&0\\X_{7}&0&0\end{bmatrix}+\begin{bmatrix}0&0\\I&0\\0&I\end{bmatrix}
\begin{bmatrix}X_{5}&X_{6}\\ X_{8}&X_{9}\end{bmatrix}\begin{bmatrix}0&I&0\\0&0&I\end{bmatrix}.
\end{align}
First, we consider the minimal rank of $X$. Applying Theorem \ref{theorem02} to (\ref{equ041}) yields
\begin{align*}
  \mathop {\min }\limits_{BXD+CYE=A } r\left(  {X }
\right) =&\mathop {\min }\limits_{X_{3},X_{5},X_{6},X_{7},X_{8},X_{9} }r\begin{bmatrix}A_{1}&A_{2}&X_{3}\\A_{4}&X_{5}&X_{6}\\X_{7}&X_{8}&X_{9}\end{bmatrix}\\=&
\mathop {\min }\limits_{X_{3},X_{7} }\left(r\begin{bmatrix}A_{1}&A_{2}&X_{3}\end{bmatrix}+r\begin{bmatrix}A_{1}\\A_{4}\\X_{7}\end{bmatrix}-r(A_{1})\right)\\=&
r\begin{bmatrix}A_{1}&A_{2} \end{bmatrix}+r\begin{bmatrix}A_{1}\\A_{4} \end{bmatrix}-r(A_{1}). (Set X_{3}=0, X_{7}=0)
\end{align*}
Applying (\ref{equ022})-(\ref{equ026}) to $r\begin{bmatrix}A&C\end{bmatrix}+r\begin{bmatrix}A\\E\end{bmatrix}-r\begin{bmatrix}A&C\\E&0\end{bmatrix}$ yields
\begin{align*}
r\begin{bmatrix}A&C\end{bmatrix}+r\begin{bmatrix}A\\E\end{bmatrix}-r\begin{bmatrix}A&C\\E&0\end{bmatrix}=r\begin{bmatrix}A_{1}&A_{2} \end{bmatrix}+r\begin{bmatrix}A_{1}\\A_{4} \end{bmatrix}-r(A_{1})= \mathop {\min }\limits_{BXD+CYE=A } r\left(  {X }
\right).
\end{align*}

\end{proof}

\begin{remark}
Tian \cite{tianact} gave the minimal rank of the general solution to the  matrix equation (\ref{sys01}). Our method  is different from this in \cite{tianact}.
\end{remark}

\section{\textbf{A simultaneous decomposition of seven real quaternion matrices }}

In this section, we give a simultaneous decomposition of
the real quaternion matrix array
\begin{align}\label{fun01}
\begin{matrix}A&B&C&D\\E&&&\\F&&&\\G&&&\end{matrix}.
\end{align}
The proof will be given in another paper.

\begin{theorem}\label{theorem05}
Let $A\in \mathbb{H}^{m\times n},
B\in \mathbb{H}^{m\times p_{1}},C\in \mathbb{H}^{m\times p_{2}},D\in \mathbb{H}^{m\times p_{3}},E\in \mathbb{H}^{q_{1}\times n},F\in \mathbb{H}^{q_{2}\times n}$  and $G\in \mathbb{H}^{q_{3}\times n}$be given. Then there exist
nonsingular matrices $P\in \mathbb{H}^{m\times m}$, $Q\in \mathbb{H}^{n\times n},T_{1}\in \mathbb{H}^{p_{1}\times p_{1}},
T_{2}\in \mathbb{H}^{p_{2}\times p_{2}},T_{3}\in \mathbb{H}^{p_{3}\times p_{3}},V_{1}\in \mathbb{H}^{q_{1}\times q_{1}},V_{2}\in \mathbb{H}^{q_{2}\times q_{2}},V_{3}\in \mathbb{H}^{q_{3}\times q_{3}}$ such that%
\begin{align}\label{equ021}
A=P S_{A}Q,B=P S_{B}T_{1}, C=P S_{C}T_{2},D=P S_{D}T_{3}, E=V_{1} S_{E}Q, F=V_{2} S_{F}Q,G=V_{3} S_{G}Q,
\end{align}
where%
\[
S_{A}=\left[
\begin{array}
[c]{ccccccccccccc}%
0 & 0 & 0 & 0 & 0 & 0&0&0&0&0&0&I_{p_{1}} & 0\\
0 & A_{11} & A_{12} & A_{13} &A_{14} & A_{15} & A_{16} &A_{17} & A_{18} & A_{19} &   0 & 0 & 0\\
0 & A_{21} & A_{22} & A_{23} &A_{24} & A_{25} & A_{26} &A_{27} & A_{28} & A_{29} &   0 & 0 & 0\\
0 & A_{31} & A_{32} & A_{33} &A_{34} & A_{35} & A_{36} &A_{37} & A_{38} & A_{39} &   0 & 0 & 0\\
0 & A_{41} & A_{42} & A_{43} &A_{44} & A_{45} & A_{46} &A_{47} & A_{48} & A_{49} &   0 & 0 & 0\\
0 & A_{51} & A_{52} & A_{53} &A_{54} & A_{55} & A_{56} &A_{57} & A_{58} & A_{59} &   0 & 0 & 0\\
0 & A_{61} & A_{62} & A_{63} &A_{64} & A_{65} & A_{66} &A_{67} & A_{68} & A_{69} &   0 & 0 & 0\\
0 & A_{71} & A_{72} & A_{73} &A_{74} & A_{75} & A_{76} &A_{77} & A_{78} & A_{79} &   0 & 0 & 0\\
0 & A_{81} & A_{82} & A_{83} &A_{84} & A_{85} & A_{86} &A_{87} & A_{88} & A_{89} &   0 & 0 & 0\\
0 & A_{91} & A_{92} & A_{93} &A_{94} & A_{95} & A_{96} &A_{97} & A_{98} & A_{99} &   0 & 0 & 0\\
0 & 0 & 0 & 0 & 0 & 0 & 0 & 0 &0&0&I_{p_{2}} & 0 & 0\\
I_{p_{3}} & 0 & 0 & 0 &  0 & 0 & 0 & 0 & 0 & 0 &  0 & 0 & 0 \\
0 & 0 & 0 & 0 &   0 & 0 & 0& 0 & 0 & 0 &  0 & 0 & 0
\end{array}
\right],\]
\[
S_{B}=\left[
\begin{array}
[c]{cccccc}%
0 & 0&0 & 0&0&B_{1}\\
I_{m_{1}} &0&0&0&0&0\\
0 & I_{m_{2}}&0&0&0&0\\
0 & 0&I_{m_{3}}&0&0&0\\
0 & 0&0&I_{m_{4}}&0&0\\
0 & 0&0&0&I_{m_{5}}&0\\
0 & 0&0 &0&0&0\\
0 & 0&0 &0&0&0\\
0 & 0&0 &0&0&0\\
0 & 0&0 &0&0&0\\
0 & 0&0 &0&0&0\\
0 & 0&0 &0&0&0\\
0 & 0&0 &0&0&0
\end{array}
\right]
,
S_{C}=\left[
\begin{array}
[c]{cccccc}%
0 & 0&0 & C_{1}&C_{2}&C_{3}\\
0 &0&0&I_{m_{1}}&0&0\\
0 & 0&0&0&I_{m_{2}}&0\\
0 & 0&0 &0&0&0\\
0 & 0&0 &0&0&0\\
0 & 0&0 &0&0&0\\
I_{m_{4}} & 0&0&0&0&0\\
0 & I_{m_{6}}&0&0&0&0\\
0 & 0&I_{m_{7}}&0&0&0\\
0 & 0&0 &0&0&0\\
0 & 0&0 &0&0&0\\
0 & 0&0 &0&0&0\\
0 & 0&0 &0&0&0
\end{array}
\right]
,
\]%
\[
S_{D}=\left[
\begin{array}
[c]{cccccc}%
0 & D_{1}&D_{2} & D_{3}&D_{4}&D_{5}\\
0 &0&0&0 &I_{m_{1}}&0\\
0 & 0&0 &0&0&0\\
0&0&0&I_{m_{3}}&0&0\\
0&I_{m_{4}} & 0&0&0&0\\
0 & 0&0 &0&0&0\\
0&I_{m_{4}} & 0&0&0&0\\
0 & 0&I_{m_{6}}&0&0&0\\
0 & 0&0 &0&0&0\\
I_{m_{8}}&0 & 0&0&0&0\\
0 & 0&0 &0&0&0\\
0 & 0&0 &0&0&0\\
0 & 0&0 &0&0&0
\end{array}
\right]
,
\]
\[
S_{E}=\left[
\begin{array}
[c]{ccccccccccccc}%
0 & I_{n_{1}} & 0 & 0 & 0 & 0&0&0&0&0&0&0 & 0\\
0 & 0&I_{n_{2}}  & 0 & 0 & 0&0&0&0&0&0&0 & 0\\
0 & 0&0&I_{n_{3}} & 0 & 0 & 0 & 0&0&0&0&0&0\\
0 & 0&0&0&I_{n_{4}} & 0 & 0 & 0 & 0&0&0&0&0\\
0 & 0&0&0&0&I_{n_{5}} & 0 & 0 & 0 & 0&0&0&0\\
E_{1} & 0 & 0 & 0 & 0&0&0&0&0&0&0 & 0&0
\end{array}
\right],\]
\[
S_{F}=\left[
\begin{array}
[c]{ccccccccccccc}%
0 & 0 & 0 & 0 & 0 & 0&I_{n_{4}}&0&0&0&0&0 & 0\\
0 & 0 & 0 & 0 & 0 & 0&0&I_{n_{6}}&0&0&0&0 & 0\\
0 & 0 & 0 & 0 & 0 & 0&0&0&I_{n_{7}}&0&0&0 & 0\\
F_{1} & I_{n_{1}}&0&0&0 & 0 & 0 & 0 & 0&0&0&0&0\\
F_{2} &0& I_{n_{2}}&0&0 & 0 & 0 & 0 & 0&0&0&0&0\\
F_{3} & 0 & 0 & 0 & 0&0&0&0&0&0&0 & 0&0
\end{array}
\right],\]
\[
S_{G}=\left[
\begin{array}
[c]{ccccccccccccc}%
0 & 0 & 0 & 0 & 0 & 0&0&0&0&I_{n_{8}}&0&0 & 0\\
G_{1} & 0&0 & 0 & I_{n_{4}} & 0&I_{n_{4}} &0&0&0&0&0 & 0\\
G_{2} & 0&0 & 0 & 0 & 0&0 &I_{n_{6}}&0&0&0&0 & 0\\
G_{3} & 0&0 & I_{n_{3}} & 0 & 0&0 &0&0&0&0&0 & 0\\
G_{4} & I_{n_{1}}&0 & 0 & 0 & 0&0 &0&0&0&0&0 & 0\\
G_{5} & 0 & 0 & 0 & 0&0&0&0&0&0&0 & 0&0
\end{array}
\right].\]
It is easy to derive the numbers of $p_{1},\cdots,p_{3},m_{1},\cdots,m_{8},n_{1},\cdots,n_{8}$.

\end{theorem}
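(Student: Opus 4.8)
The plan is to prove Theorem~\ref{theorem05} by the same constructive mechanism that established Theorem~\ref{theorem01}: the eight nonsingular factors $P,Q,T_{1},T_{2},T_{3},V_{1},V_{2},V_{3}$ are built as products of finitely many elementary block matrices, produced in a finite sequence of reduction steps, after which one reads off $S_{A},\dots,S_{G}$ and the orders of the identity blocks directly. The two results used as building blocks are the decomposition of a matrix triplet (one column matrix and one row matrix) and Theorem~\ref{theorem01} itself (two column matrices and two row matrices); the genuinely new content is the passage from ``two and two'' to ``three and three'', that is, accommodating the additional column matrix $D$ and the additional row matrix $G$.

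Concretely I would proceed in stages. First, mimicking Step~1 of the proof of Theorem~\ref{theorem01}, choose nonsingular $P_{1},Q_{1}$ so that $\begin{bmatrix}B&C&D\end{bmatrix}$ is carried by left multiplication to a matrix whose nonzero part has full row rank and $\begin{bmatrix}E\\F\\G\end{bmatrix}$ is carried by right multiplication to a matrix whose nonzero part has full column rank; the rows and columns zeroed out here furnish the outer (identity/zero) framing of $S_{A}$, in particular its trailing zero block-row and zero block-column and the blocks $I_{p_{1}},I_{p_{2}},I_{p_{3}}$. Next, carry out a reduction patterned on Steps~1--7 of the proof of Theorem~\ref{theorem01}, but using only $B,C$ on the column side and $E,F$ on the row side: this places $B,C,E,F$ into the positions prescribed by $S_{B},S_{C},S_{E},S_{F}$ and normalizes all of $A$ except for an inner block of free ``$A_{ij}$''-type entries. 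It then remains to normalize the (already transformed) matrices $D$ and $G$. For this one identifies precisely which further elementary block operations still fix the canonical entries already produced for $A,B,C,E,F$ -- the operations in the stabilizer of the current normal form -- and uses them to bring $D$, $G$ and their interaction with the inner $A$-block into the shapes prescribed by $S_{D}$ and $S_{G}$; this last reduction is again of matrix-triplet / Theorem~\ref{theorem01} type, now performed inside the inner block. A final round of block Gaussian elimination, using the pivots $I_{p_{1}},I_{p_{2}},I_{p_{3}}$ and the $I_{m_{i}},I_{n_{j}}$ already in place, clears the cross terms generated along the way -- exactly the manoeuvre carried out in Steps~3, 5 and~7 of the proof of Theorem~\ref{theorem01} -- and produces the stated canonical array.

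The main obstacle is bookkeeping rather than any isolated difficulty. With three column matrices and three row matrices, the lattice of subspaces they cut out of $\mathbb{H}^{m}$, and dually out of the row space, has many more cells than in the ``two and two'' case; this is exactly why $S_{A}$ is a $13\times13$ block matrix and why $S_{B},S_{C},S_{D}$ carry the nested pattern of identity blocks $I_{m_{1}},\dots,I_{m_{8}}$. Tracking, after each normalization, which block operations remain admissible, and checking that the admissible ones still suffice to reach the stated positions without destroying a normal form achieved earlier, is where all the care goes; there is no conceptual jump beyond the proof of Theorem~\ref{theorem01}, only a longer and more intricate computation. An alternative organization -- normalizing $D$ and $G$ together with $A$ first, then $B,C,E,F$ -- leads to the same bookkeeping.

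Finally, once the decomposition is in hand, the orders $p_{1},\dots,p_{3},m_{1},\dots,m_{8},n_{1},\dots,n_{8}$ are obtained exactly as in (\ref{equ022})--(\ref{equ026}). Since $P,Q,T_{1},T_{2},T_{3},V_{1},V_{2},V_{3}$ are nonsingular, each of $r\begin{bmatrix}A&B&C&D\end{bmatrix}$, $r\begin{bmatrix}A\\E\\F\\G\end{bmatrix}$ and the various mixed block matrices one forms from $A,B,C,D,E,F,G$ equals the rank of the corresponding submatrix of the canonical array, hence a sum of some of the $p_{i},m_{i},n_{j}$; inverting this linear system yields explicit formulas for all of them.
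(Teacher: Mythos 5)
The paper itself contains no proof of Theorem \ref{theorem05}: the authors state explicitly that the proof will appear in another paper, so there is no argument of theirs to compare yours against. Your sketch follows exactly the strategy one would expect them to use, namely iterating the constructive, elementary-block-operation procedure of Theorem \ref{theorem01} (itself modelled on the matrix-triplet decompositions it cites), so in spirit you are on the intended route.

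As a proof, however, what you have written is a plan rather than an argument, and the plan as stated has one step that cannot be carried out literally and one claim that is asserted where all the real work lies. First, in your middle stage you say you place $B,C,E,F$ ``into the positions prescribed by $S_{B},S_{C},S_{E},S_{F}$'' before touching $D$ and $G$. That is not possible at that stage: the block partitions $m_{1},\dots,m_{8}$ and $n_{1},\dots,n_{8}$ appearing in $S_{B},S_{C},S_{E},S_{F}$ are strictly finer than the partitions produced by the Theorem \ref{theorem01} reduction of $(A,B,C,E,F)$ alone, because they encode the interaction of $\mathcal{R}(B),\mathcal{R}(C)$ with $\mathcal{R}(D)$ (note, e.g., that $I_{m_{4}}$ occurs in both $S_{C}$ and, twice, in $S_{D}$, and $I_{n_{4}}$ occurs in $S_{E}$, $S_{F}$ and twice in $S_{G}$). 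So the correct formulation is that stage two yields only the coarser Theorem \ref{theorem01} normal form, and the refinement of those identity blocks happens while $D$ and $G$ are being normalized. Second, the decisive point of the whole proof is precisely the claim you dispose of in one sentence: that the stabilizer of the partial normal form already achieved (the group of block operations fixing the canonical parts of $A,B,C,E,F$) is large enough to bring the transformed $D$ and $G$, together with their cross terms against the inner $A$-block, into exactly the shapes $S_{D}$ and $S_{G}$, including the repeated identity blocks noted above, and that the subsequent eliminations do not reintroduce nonzero entries in positions already cleared. Verifying this is not mere bookkeeping; it is the content of the theorem, and until the admissible operations are listed and the reduction of $D,G$ is actually performed with them (or an equivalent dimension-count/subspace-lattice argument is given), the sketch does not establish the stated canonical form. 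The final paragraph, on recovering $p_{1},\dots,p_{3},m_{1},\dots,m_{8},n_{1},\dots,n_{8}$ from ranks of block matrices built from $A,\dots,G$, is fine and matches the paper's remark that these numbers are easy to derive once the decomposition exists.
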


\begin{remark}
Tian \cite{tianact} derived the maximal  rank of the matrix expression
\begin{align}\label{fun15}
k(X,Y,Z)=A-BXE-CYF-DZG.
\end{align}
However, Tian did not give the proof. Applying Theorem \ref{theorem05}, we can find the maximal and minimal ranks
of the matrix expression (\ref{fun15}). Moreover, we can give a solvability condition and the general solution to
the matrix equation
\begin{align}\label{systhem001}
BXE+CYF+DZG=A.
\end{align}
The corresponding results and their applications
will be given in another paper.

\end{remark}
\begin{remark}
Similarly, we can establish a simultaneous decomposition of $[A,B,C,D]$, where $A=A^{*}$. Using the results on
the simultaneous decomposition of $[A,B,C,D]$, we can derive the maximal rank and extremal inertias of the Hermitian matrix expression
\begin{align}\label{fun16}
h(X,Y,Z)=A-BXB^{*}-CYC^{*}-DZD^{*},~X=X^{*},~Y=Y^{*},~Z=Z^{*}.
\end{align}
In addition, we can give a solvability condition and the general solution to
the matrix equation
\begin{align}\label{systhem002}
BXB^{*}+CYC^{*}+DZD^{*}=A,~X=X^{*},~Y=Y^{*},~Z=Z^{*}.
\end{align}
The corresponding results and their applications
will be given in another paper.

\end{remark}

\begin{remark}
  Tian \cite{Tianlaa} derived the maximal and minimal inertias of the Hermitian matrix expression
\begin{align}\label{fun017}
f_{k}(X_{1},X_{2},\cdots,X_{k})=A-B_{1}X_{1}B_{1}^{*}-\cdots-B_{k}X_{k}B_{k}^{*},X_{i}=X_{i}^{*}
\end{align}
using generalized inverses of matrices. Chen, He and Wang \cite{chendi} derived the maximal  rank of the Hermitian matrix expression
(\ref{fun017})
through  a simultaneous decomposition of the matrix $[A,B_{1},\cdots,B_{k}]$,
where $A$ is Hermitian. The approach in this paper is different with this in \cite{chendi}.
\end{remark}
\section{\textbf{Conclusions}}

We have established a simultaneous decomposition of   five real quaternion matrices in which
three of  them have the same column numbers, meanwhile three of them have the same row numbers.
Using the simultaneous
matrix decomposition, we  have presented the maximal and minimal ranks of the real quaternion matrix
expressions (\ref{fun02}) and (\ref{fun03}). We have given
a necessary and sufficient condition for the existence of the general solution to the real quaternion
matrix equation (\ref{sys01}). The expression of the general solution
to (\ref{sys01}) has also been given when it is solvable. Moreover, we have derived the minimal rank of
the general solution to  the real quaternion
matrix equation (\ref{sys01}). The results of this paper may be generalized to an arbitrary division ring (with an involutional anti-automorphism).

\end{document}